\let\mathcal\mathscr
\def\Z{{\bf Z}}
\def\C{{\bf C}}
\def\B{{\bf B}}
\def\R{{\bf R}}
\def\Q{{\bf Q}}
\def\P{{\bf P}}
\def\cO{\mathcal{O}}
\def\cC{\mathcal{C}}
\def\cZ{\mathcal{Z}}
\def\llra{\hbox to 10mm{\rightarrowfill}}
\def\lllra{\hbox to 15mm{\rightarrowfill}}
\def\llla{\hbox to 10mm{\leftarrowfill}}
\def\lllla{\hbox to 15mm{\leftarrowfill}}
\def\hra{\hookrightarrow}
\def\eps{\varepsilon}
\def\ie{\hbox{i.e.}}
\DeclareMathOperator{\isomto}{\stackrel{{}_{\scriptstyle\sim}}{\to}}
\DeclareMathOperator{\mult}{mult}
\def\Im{\mathop{\rm Im}\nolimits}
\DeclareMathOperator{\Ker}{Ker}
\DeclareMathOperator{\pr}{pr}
\def\Re{\mathop{\rm Re}\nolimits}
\DeclareMathOperator{\Hdg}{Hdg}
\DeclareMathOperator{\Supp}{Supp}
\DeclareMathOperator{\innt}{int}
\def\llra{\hbox to 10mm{\rightarrowfill}}
\def\lllra{\hbox to 15mm{\rightarrowfill}}
\newtheorem{lemm}{Lemma}[section]
\newtheorem{theo}[lemm]{Theorem}
\newtheorem{coro}[lemm]{Corollary}
\newtheorem{prop}[lemm]{Proposition}
\newtheorem{conj}[lemm]{Conjecture}
\theoremstyle{definition}
\newtheorem{rema}[lemm]{Remark}
\theoremstyle{remark}
\newtheorem*{remark*}{Remark}
\newtheorem*{note*}{Note}
\begin{document}
\title{Pseudo-effective classes and pushforwards}

 \author[O. Debarre]{Olivier Debarre}
\address{D\'epartement de  Math\'ematiques et Applications\\
CNRS UMR 8553\\
\'Ecole Normale Sup\'erieure\\
45 rue d'Ulm\\
75230 Paris cedex 05, France}
\email{{\tt olivier.debarre@ens.fr}}
\author[Z. Jiang]{Zhi Jiang}
\address{D\'epartement de Math\'ematiques\\
CNRS UMR 8628\\Universit\'e Paris-Sud\\B\^atiment 425\\91405 Orsay cedex, France}
\email{{\tt zhi.jiang@math.u-psud.fr}}
\author[C. Voisin]{Claire Voisin}
\address{Centre de math\'ematiques Laurent Schwartz\\CNRS UMR 7640\\\'Ecole Polytechnique\\91128 Palaiseau cedex, France }
\email{{\tt voisin@math.polytechnique.fr}}

 \begin{abstract}
 Given a morphism between complex projective varieties, we make several conjectures on the relations between the set of pseudo-effective (co)homology classes which are annihilated by pushforward and the set of classes of varieties contracted by the morphism. We prove these conjectures for classes of curves or divisors. We also prove that one of these conjectures implies Grothendieck's generalized Hodge conjecture for varieties with Hodge coniveau at least 1.
  \end{abstract}
    \subjclass[2010]{14C25, 14C30.}
\keywords{Pseudo-effective classes, Hodge conjecture.}

 \maketitle

\begin{flushright} {\it To the memory of Andrey Todorov}
\end{flushright}

\section{Introduction and conjectures}

Let $X$ and $Y$ be    complex projective varieties and let $\phi:X\to Y$
be a  morphism. We say that an irreducible subvariety $Z$ of $X$ is {\em contracted by $\phi$} if $\dim (\phi(Z))<\dim (Z)$. If $k:=\dim(Z)$, this is equivalent to saying that the  class $[Z]\in H_{2k}(X,\R)$ is in the kernel of the push-forward morphism
$$ \phi_*:H_{2k}(X,\R )\rightarrow H_{2k}(Y,\R ).$$

In this article, we discuss    whether some extension of this equivalence to more general
 cycle classes is true. In $H_{2k}(X,\R)$,   classes of irreducible subvarieties of $X$ generate a closed convex
cone, called the pseudo-effective cone, and whose elements are called {\em pseudo-effective classes.} 
We are interested in  pseudo-effective classes annihilated by $\phi_*$ and make the following conjecture.

\begin{conj}\label{conj1} Let $X$ and $Y$ be  complex projective varieties and let $\phi:X\rightarrow Y$
be a  morphism.   Then any pseudo-effective   class   $ \alpha\in
H_{2k}(X,\R )$ such that $\phi_*\alpha=0$ in $H_{2k}(Y,\R )$ belongs
to the real vector space (resp.\ the closed convex cone) spanned by
classes of  $k$-dimensional subvarieties of $X$ which are contracted by
$\phi$.
\end{conj}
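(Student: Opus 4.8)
The plan is to realize the space (resp.\ cone) spanned by contracted subvarieties as a \emph{face} of the pseudo-effective cone $\overline{\Eff}_k(X)\subset H_{2k}(X,\R)$. One inclusion is immediate: if $Z\subset X$ is an irreducible $k$-dimensional subvariety contracted by $\phi$, then $\dim\phi(Z)<k$ forces $\phi_*[Z]=0$, so the cone spanned by contracted subvarieties is contained in $\{\alpha\in\overline{\Eff}_k(X):\phi_*\alpha=0\}$; the content of the statement is the reverse inclusion. Since a pseudo-effective class lying in the closed cone spanned by contracted subvarieties a fortiori lies in their linear span, it suffices to treat the cone assertion.

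To set up the face, fix an ample class $h\in H^2(Y,\R)$ and put $\eta:=\phi^*h\in H^2(X,\R)$, a nef class. By the projection formula, for every $k$-dimensional subvariety $Z$ we have $\langle\eta^k,[Z]\rangle=\langle h^k,\phi_*[Z]\rangle$, which is strictly positive when $\dim\phi(Z)=k$ and vanishes exactly when $Z$ is contracted. Thus $\eta^k$ is a linear functional, nonnegative on $\overline{\Eff}_k(X)$, whose zero locus among the generators is precisely the set of contracted classes. Moreover $\phi_*\alpha=0$ gives $\langle\eta^k,\alpha\rangle=\langle h^k,\phi_*\alpha\rangle=0$, so $\alpha$ lies in the face $F:=\overline{\Eff}_k(X)\cap(\eta^k)^{\perp}$. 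It therefore suffices to prove the \emph{stronger} statement that $F$ is spanned, as a closed convex cone, by classes of contracted $k$-dimensional subvarieties.

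The main obstacle is that a face of the closed convex cone generated by a set $S$ need not be generated by the subset of $S$ lying on the supporting hyperplane: there can be extremal rays of $F$ that are only \emph{limits} of classes of non-contracted subvarieties whose $\eta^k$-degree tends to zero. Writing $\alpha=\lim_j\sum_i a_{ij}[Z_{ij}]$ with $a_{ij}>0$, the relation $\sum_i a_{ij}\langle\eta^k,[Z_{ij}]\rangle\to0$ controls only the \emph{total} positive degree of the non-contracted components, not their individual homology classes, which may accumulate on $F$ without being themselves contracted. Ruling out this drift of pseudo-effective mass toward the contracted locus is the crux, and it is exactly the point that refuses to be purely formal and forces a case analysis.

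For the two cases the resolution is structural. When $k=1$, the class $\eta=\phi^*h$ is semiample and, after Stein factorization, its associated fibration is $\phi$ itself; one then shows that the face $F=\overline{\Eff}_1(X)\cap\eta^{\perp}$ coincides with the relative cone $\overline{NE}(X/Y)$ of curves contracted by $\phi$, the key input being that this relative cone is closed by properness of $\phi$ and a relative ampleness argument, so the limiting drift cannot produce new rays. When $k=\dim X-1$, I would pass from classes to positive closed $(1,1)$-currents representing $\alpha$ and invoke the divisorial (Siu/Boucksom) decomposition together with the negativity lemma of birational geometry: the negative part is an effective combination of prime divisors, which the constraint $\phi_*\alpha=0$ should force to be $\phi$-contracted, while the movable residual part, pairing to zero against $\eta^{k}$ and against general complete-intersection curves in the fibers of $\phi$, should be forced to vanish or to be contracted as well. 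In both cases the extra geometric input — semiampleness of $\eta$ for curves, the current-theoretic decomposition and negativity for divisors — is precisely what tames the limiting behavior that the purely convex-geometric argument cannot reach.
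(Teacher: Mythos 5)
The decisive gap is one of scope: the statement you set out to prove is, in the paper, a \emph{conjecture}, proved there only for $k=1$ (and, in the smooth cohomological variant, for divisors), and your proposal likewise supplies arguments only for $k=1$ and $k=\dim X-1$. For $1<k<\dim X-1$ nothing at all is said, so the statement as given --- which quantifies over all $k$ --- is not addressed. Your convex-geometric setup is sound as far as it goes: the functional $\eta^k=(\phi^*h)^k$ does cut out a face $F=\overline{\Eff}_k(X)\cap(\eta^k)^{\perp}$ containing $\alpha$, and you are right that the crux is that a face of the cone generated by a set $S$ need not be generated by the elements of $S$ lying on the supporting hyperplane. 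But the two case analyses meant to overcome this do not close the gap.

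For $k=1$, your relative-Kleiman argument is essentially the paper's proof of its Theorem \ref{curvecase}, and it requires $X$ and $Y$ \emph{smooth}: it takes place in $N_1(X)_\R$ and uses its duality with $N^1(X)_\R$, i.e.\ Cartier divisor classes pairing against curve classes. Conjecture \ref{conj1} concerns arbitrary (possibly singular) projective varieties, and the paper states explicitly that this duality is unavailable in the singular case; its proof there (Theorem \ref{theo5.4}) instead reduces via Proposition \ref{lemma1}, uses strictness of morphisms of mixed Hodge structures, and performs a local analysis of a limiting positive current --- and even then obtains only the \emph{linear-span} (weak) form, whereas you aim for the cone (strong) form, which is open for singular $X$. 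So the step ``$F$ coincides with $\overline{NE}(X/Y)$ by properness and a relative ampleness argument'' fails in the generality of the statement. For the divisor case, your decisive assertions are flagged by the word ``should'': that the movable part of the Siu/Boucksom decomposition vanishes and that the negative part is $\phi$-contracted is exactly what must be proved, and the paper's corresponding work (Theorems \ref{theodiv} and \ref{divcone}) needs real input --- the Hamm--L\^e Lefschetz theorem, degeneration of the Leray spectral sequence, the Koll\'ar--Larsen/Lazarsfeld result on exceptional divisors, flattening, and the $\sigma$-decomposition via Gongyo--Lehmann --- none of which your sketch replaces. Note finally that even the paper does not claim the divisor case of Conjecture \ref{conj1} itself (singular $X$, homology classes); it proves only the smooth cohomological Conjecture \ref{conj2} for $l=1$.
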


The ``resp.'' form of the conjecture is obviously stronger and will be refered to as the ``Strong Conjecture \ref{conj1},'' as opposed to the ``Weak Conjecture \ref{conj1}.''

Now assume  moreover that $X$ and $Y$ are {\em smooth} and  set $c :=\dim (X)-\dim (Y)$. Using   Poincar\'e duality, one     defines    Gysin morphisms
$$ \phi_*:H^{2l}(X,\R )\rightarrow H^{2l-2c}(Y,\R ) $$
 as the compositions
$$   H^{2l}(X,\R )\xrightarrow{PD_X} H_{2\dim(X)-2l}(X,\R )\xrightarrow{\phi_*} H_{2\dim(X)-2l}(Y,\R )\xrightarrow{PD_Y} H^{2l-2c}(Y,\R ).$$
Any subvariety $Z$ of $X$ has a cohomology class, and $Z$ is contracted by $\phi$ if and only if this class is annihilated by  the Gysin morphism. Conjecture \ref{conj1}    then takes the following form.

\begin{conj} \label{conj2}  Let $X$ and $Y$ be  smooth complex projective varieties and let $\phi:X\rightarrow Y$
be a  morphism.   Then any pseudo-effective   class   $ \alpha\in H^{2l}(X,\R )$
such that $\phi_*\alpha=0$
belongs to the real vector space (resp.\ the closed convex cone) spanned by classes of   $l$-codimensional subvarieties of $X$ which are contracted by $\phi$.
\end{conj}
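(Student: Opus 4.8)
The plan is first to observe that, under the smoothness hypothesis, Conjecture \ref{conj2} is not an independent assertion but a faithful translation of Conjecture \ref{conj1} through Poincar\'e duality, so that the genuine content lies entirely in the cases where the latter is accessible. Indeed, for a codimension-$l$ subvariety $Z\subset X$ the cohomology class $[Z]\in H^{2l}(X,\R)$ corresponds under $PD_X$ to the homology class of $Z$ in $H_{2\dim(X)-2l}(X,\R)$, and by the very definition of the Gysin morphism the conditions $\phi_*[Z]=0$ in cohomology and $\phi_*PD_X[Z]=0$ in homology coincide, both expressing that $Z$ is contracted. Since the pseudo-effective cone in $H^{2l}(X,\R)$ is by construction the $PD_X$-image of the one in $H_{2\dim(X)-2l}(X,\R)$, Conjecture \ref{conj2} for the parameter $l$ is exactly Conjecture \ref{conj1} for $k=\dim(X)-l$, restricted to smooth $X$ and $Y$. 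I would therefore prove Conjecture \ref{conj2} by establishing Conjecture \ref{conj1} in the two accessible extremes, divisors ($l=1$) and curves ($k=1$).

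For divisors I would argue as follows. If $c=\dim(X)-\dim(Y)\geq 2$ every divisor is automatically contracted and there is nothing to prove, so assume $c=1$; then $\phi_*$ sends $H^2(X,\R)$ to $H^0(Y,\R)=\R$ and carries a pseudo-effective class to a nonnegative multiple of $[Y]$, the multiple being the intersection number with a general fibre $F$. Given pseudo-effective $\alpha$ with $\phi_*\alpha=0$, I would invoke the divisorial Zariski decomposition of Boucksom--Nakayama, $\alpha=P+N$, with $N$ an effective combination of prime divisors and $P$ movable. Since $\phi_*P$ and $\phi_*N$ are nonnegative and sum to zero, each vanishes: the vanishing of $\phi_*N$ forces every component of $N$ to be contracted, while $\phi_*P=(P\cdot F)[Y]=0$ says that the movable class $P$ meets the covering family of fibres in degree zero. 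The remaining point, that such a $P$ lies in the closed cone generated by contracted divisors, is where the structure of the movable cone (equivalently, the duality between pseudo-effective divisors and movable curves) enters: a movable class trivial on the general fibre must be a limit of pullbacks $\phi^*(\cdot)$, and these are contracted for dimension reasons, since $\phi(\phi^*A)=A$ has dimension $\dim(Y)-1$.

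For the curve case I would dualize. A pseudo-effective curve class $\alpha\in H^{2\dim(X)-2}(X,\R)$ with $\phi_*\alpha=0$ satisfies $\alpha\cdot\phi^*A=\phi_*\alpha\cdot A=0$ for every ample $A$ on $Y$, by the projection formula, so $\alpha$ pairs to zero with the whole of $\phi^*\Nef(Y)$. I would then show that a pseudo-effective curve class orthogonal to all pullbacks of nef classes is carried by the fibres of $\phi$, hence is a limit of classes of curves contracted by $\phi$; this rests on the characterization of the pseudo-effective cone of curves by pairing against the nef cone of divisors.

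The principal obstacle, and the reason Conjecture \ref{conj2} remains a conjecture rather than a theorem, is the passage to \emph{intermediate} dimensions $1<k<\dim(X)-1$. There one has neither a Zariski-type decomposition of a pseudo-effective class into a movable and a rigid part, nor a clean duality identifying the cones in play, so the positivity bookkeeping that drives the divisor and curve arguments collapses. Distinguishing the strong (closed convex cone) form from the weak (vector space) form is a further difficulty, since the extra rigidity needed to remain inside the cone is precisely what the missing decomposition would supply.
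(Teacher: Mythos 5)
Your framing is correct as far as it goes: under Poincar\'e duality, Conjecture \ref{conj2} is the smooth-variety incarnation of Conjecture \ref{conj1}, the full statement is open, and the tractable cases are exactly curves and divisors. Your curve-case sketch is essentially the paper's proof of Theorem \ref{curvecase}: the ``characterization of pseudo-effective curve classes by pairing against nef divisors'' is Kleiman duality, and making your orthogonality argument rigorous is precisely the paper's separation argument via the relative Kleiman criterion (a divisor class positive on the cone of contracted curves is $\phi$-ample, so $D+m\phi^*H$ is ample for $m\gg 0$, whence $D\cdot\alpha\geq 0$ for every $\alpha\in\overline{NE}(X)\cap\Ker(\phi_*)$); this even yields the strong (cone) form.

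The divisor case, however, has genuine gaps. First, your case division skips $c=0$: you pass from ``$c\geq 2$ is trivial'' to ``assume $c=1$'', omitting both the reduction to surjective $\phi$ (Proposition \ref{lemma1}; without it the trichotomy on $c$ is meaningless, e.g.\ $c<0$ for embeddings) and, more seriously, the generically finite case $c=0$. That case is the most basic instance of the conjecture --- exceptional divisors of birational morphisms are exactly pseudo-effective classes killed by $\phi_*$ --- and the paper devotes half of the proof of Theorem \ref{theodiv} to it, using the Hamm--L\^e theorem (Theorem \ref{hammutile}) to restrict to a general complete intersection surface, applying the curve case there, and concluding with Lemma \ref{l1}. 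Second, in your $c=1$ argument the crux --- that a movable class with degree $0$ on the general fibre lies in the closure of $\phi^*N^1(Y)_\R$ (modulo vertical classes) --- is asserted, not proved, and it is not a citable consequence of the duality between pseudo-effective divisors and movable curves; it is essentially equivalent to the statement being proved. Note that fibre-degree zero alone gives nothing: on $E\times E\to E$ (first projection, $E$ an elliptic curve) the class $[\Delta]-[E\times\{0\}]$ has degree $0$ on fibres but is not a pullback plus a vertical class, so all the content of your claim sits in exploiting positivity --- which is exactly what the paper's argument supplies, via degeneration of the Leray spectral sequence, Hamm--L\^e again, and the curve case applied to the surfaces lying over general complete-intersection curves in $Y$ (Lemmas \ref{leleray} and \ref{leproof}). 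Finally, you do not address the strong (cone) form for divisors at all; in the paper this requires the Koll\'ar--Larsen result for $c=0$ and the flattening plus $\sigma$-decomposition argument of Theorem \ref{divcone} for $c=1$, neither of which is replaced by anything in your Zariski-decomposition bookkeeping.
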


Finally, a variant of Conjecture \ref{conj2}  is obtained by
modifying the positivity notion used: instead of algebraic
pseudo-effectivity, we can consider weak, that is, analytic
pseudo-effectivity, where we say that an algebraic cycle is
{\em weakly pseudo-effective} if its cohomology class is the class
of a positive current. Pseudo-effective cycle classes are weakly
pseudo-effective, but the converse is unknown except in the divisor
case (see \cite{demailly1}). More generally, we will say that a real
cohomology  class is weakly pseudo-effective  if it is a limit of
weakly pseudo-effective cycle classes.

We can consider the following   strengthening of the weak form of Conjecture \ref{conj2}.

\begin{conj}\label{conj5}  Let $X$ and $Y$ be  smooth complex projective varieties and let $\phi:X\rightarrow Y$
be a  morphism.   Then any weakly pseudo-effective   class   $ \alpha\in H^{2l}(X,\R )$
such that $\phi_*\alpha=0$
belongs to the real vector space spanned by classes of  $l$-codimensional subvarieties of $X$ which are contracted by $\phi$.
\end{conj}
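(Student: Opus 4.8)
The plan is to argue with positive closed currents, exploiting the fact that the weak pseudo-effective cone is closed. First I would note that the cone of classes of positive closed $(l,l)$-currents on $X$ is closed: the mass of such a current is controlled by its pairing with $\omega^{n-l}$, where $\omega$ is a Kähler form and $n=\dim(X)$, so a convergent sequence of such classes lifts to a weakly convergent sequence of currents with positive closed limit. Hence a weakly pseudo-effective class $\alpha\in H^{2l}(X,\R)$ is itself of the form $[T]$ for a positive closed current $T$ of bidimension $(n-l,n-l)$. The hypothesis $\phi_*\alpha=0$ then reads $[\phi_*T]=0$, and since $\phi_*T$ is again positive and closed, the vanishing of its class forces $\phi_*T=0$ as a current (a positive closed current with zero class on the projective, hence Kähler, variety $Y$ is zero, as one sees by pairing with powers of a Kähler class). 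So the whole problem reduces to understanding a positive closed current $T$ with $\phi_*T=0$. I also record that $\alpha$ lies automatically in the real span $N^l(X)_\R$ of algebraic cycle classes, being a limit of such; thus the difficulty is \emph{not} the transcendence of $\alpha$ but the location of $\alpha$ inside $N^l(X)_\R$.

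Next I would reduce the geometry. Passing to the Stein factorization $X\xrightarrow{g}X'\xrightarrow{h}Y$, the finiteness of $h$ prevents any cancellation of mass (the pushforward of a nonzero positive current under a finite morphism is nonzero, by pairing with pullbacks of an ample class), so $\phi_*T=0$ already gives $g_*T=0$ for the connected-fiber part $g$. When $g$ is generically finite, hence birational onto its image, I expect $T$ to be supported on $\Exc(g)$: over the open set where $g$ is an isomorphism the equation $g_*T=0$ forces $T$ to vanish. Siu's structure theorem (with the refinements of Demailly and El Mir) then writes $T=\sum_i c_i[Z_i]$, with $c_i>0$ and $Z_i$ the $(n-l)$-dimensional components of $\Supp(T)\subseteq\Exc(g)$, any lower-dimensional remainder being a positive closed current supported on a set of too small dimension, hence zero. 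Finally, positivity rules out non-contracted components: if some $Z_i$ were not contracted, then $g_*T=\sum_i c_i\deg(g|_{Z_i})[g(Z_i)]$ would be a strictly positive combination of nonzero effective classes, which cannot vanish (pair with a suitable power of a Kähler class). Thus every $Z_i$ is contracted, and in this case $T$ lies even in the closed convex cone spanned by contracted subvarieties.

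The main obstacle is the genuinely fibered case, where $g$ has positive-dimensional general fibers. Here $T$ need not be concentrated on any proper analytic subset — the typical example is $T=\omega^{\wedge l}$ — so the support/structure theorem gives no information, and the current-theoretic method alone cannot detect where $\alpha$ sits in $N^l(X)_\R$. What one must instead show is that the \emph{algebraic} class $\alpha$, subject to $\phi_*\alpha=0$ and weak positivity, lies in the subspace spanned by contracted subvarieties; the diffuse currents demonstrate that this is a statement about the algebraic/Hodge structure of $\ker(\phi_*)$ rather than about supports. This is exactly the point where Hodge-theoretic input becomes unavoidable: a positive closed current annihilated by $\phi_*$ records a class of coniveau governed by the fiber dimension of $g$, and expressing it as a combination of classes of contracted subvarieties is an algebraicity statement of generalized-Hodge-conjecture type. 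I do not expect a purely current-theoretic argument to close this gap; one would need to combine the relative Hodge theory of $g$ with precisely the kind of algebraicity that the conjectures are shown elsewhere in the paper to encode.

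As partial evidence that the scheme is sound, it becomes unconditional in the two extreme codimensions. For $l=1$, weak and strong pseudo-effectivity coincide for divisors by Demailly's results, and the divisorial contraction structure of a morphism makes the argument of the second paragraph complete. The opposite extreme of curves is also settled (as announced in the abstract), so the whole content of the conjecture is concentrated in the intermediate range, where the diffuse part isolated above must be proved algebraic. It is this algebraicity step, and not the current-theoretic reduction, that I expect to be the genuine difficulty, and which explains why the statement is offered only as a conjecture.
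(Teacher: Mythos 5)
There is no proof in the paper to compare yours against: the statement you were given is Conjecture \ref{conj5} itself, which the paper leaves open. What the paper does provide is exactly the kind of partial evidence you assemble: the finite case (Proposition \ref{profinising}, Lemma \ref{leposcurrent} and the remark following them, whose mechanism --- represent the class by a positive closed current, kill it by pairing with powers of an ample class --- is the same as your first two paragraphs); the curve case $l=\dim(X)-1$ (the proof of Theorem \ref{theo5.4}, which as noted there applies verbatim to weakly pseudo-effective classes); the divisor case, where weak and ordinary pseudo-effectivity coincide by \cite{demailly1} so that section \ref{secdivisors} applies; the reduction of Proposition \ref{lemma1}; and Theorem \ref{th65}, which shows that Conjecture \ref{conj5} implies the strong generalized Hodge conjecture for varieties with $H^{i,0}=0$ for $i>0$. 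Your closing diagnosis --- that the genuinely fibered case is an algebraicity statement of generalized-Hodge-conjecture type, out of reach of purely current-theoretic arguments --- is precisely the content of section \ref{secpsefan}, so your assessment of where the difficulty lies matches the paper's. (Your Stein-factorization step is also fine as you use it: you only invoke it at the level of currents, so you never need to formulate the conjecture on the possibly singular Stein factor, which is the obstruction the introduction alludes to in the smooth setting.)

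One step of your partial argument is, however, genuinely wrong: in the generically finite case you claim that $\Supp(T)\subset\Exc(g)$ together with Siu's theorem forces $T=\sum_i c_i[Z_i]$, the remainder being ``supported on a set of too small dimension, hence zero.'' The danger is the opposite one: the support can be too \emph{big}, not too small. The support theorem for positive closed currents of bidimension $(n-l,n-l)$ requires the support to have dimension at most $n-l$, whereas $\Exc(g)$ can have dimension up to $n-1$; Siu's decomposition controls the Lelong upper-level sets of the residual part, not its support. Concretely, let $g:X\to\P^n$ be the blow-up of a point, let $i:E\simeq\P^{n-1}\hra X$ be the exceptional divisor, and for $2\le l\le n-1$ set $T:=i_*\bigl(\omega_E^{\,l-1}\bigr)$ with $\omega_E$ the Fubini--Study form. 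This is a nonzero positive closed current of bidegree $(l,l)$ with $g_*T=0$, supported on $\Exc(g)=E$, with vanishing Lelong numbers: it is diffuse and is not a cycle. Its cohomology class is the class of a linear $\P^{n-l}\subset E$, so the conclusion of the conjecture holds for it, but not by your mechanism; recovering the conclusion in general would require a homological support statement (the class of a closed normal current supported on a closed algebraic subset $Z$ comes from $H_{2(n-l)}(Z,\R)$) followed by an algebraicity argument for the resulting Hodge classes --- which is again the GHC-type issue you isolate in the fibered case. Only for $l=1$ does the dimension count let the support theorem apply, which is consistent with the paper's handling of divisors by entirely different, algebraic, means in section \ref{secdivisors}.
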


Evidence for these conjectures is provided in section \ref{sf}, where we prove them when  no subvarieties are contracted by $\phi$ (\ie, when $\phi$ is finite). In section \ref{ss}, we prove that we may assume that $\phi$ is surjective and $Y$ is a projective space.  We   show furthermore that Conjecture \ref{conj1} can be
reduced to the case where $\phi$ has connected fibers (a reduction
that we were unable to perform in the smooth setting).

Our first main result is then the following.

\begin{theo} For curves ($k=1$ and $l=\dim(X)-1$), the Weak Conjecture \ref{conj1}, the Strong
Conjecture \ref{conj2}, and Conjecture \ref{conj5} all hold true.

For divisors ($l= 1$),   the Strong
Conjecture \ref{conj2} holds true.
\end{theo}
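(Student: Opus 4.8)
The plan is to treat both statements through the single mechanism of \emph{orthogonality to a semiample class}, after the reductions of Section~\ref{ss}. By \loc we may assume $\phi\colon X\to Y=\P^n$ is surjective; set $h:=\phi^*H$, a nef and semiample class, where $H$ is the hyperplane class. The first step is the numerical translation: by the projection formula, $\phi_*\alpha=0$ is equivalent, for a curve class $\alpha$ ($k=1$), to $\alpha\cdot h=0$, and, for a divisor class $\alpha$ ($l=1$), to $\alpha\cdot h^{\dim X-1}=0$; moreover a prime curve, resp.\ divisor, is contracted by $\phi$ precisely when it is annihilated by the same intersection. Thus in every case we must identify the face of the pseudo-effective cone cut out by this functional with the closed cone spanned by contracted cycles, the inclusion ``$\supseteq$'' being immediate and only ``$\subseteq$'' requiring work.

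For curves I would argue by separation. Let $\alpha$ be pseudo-effective with $\alpha\cdot h=0$ and suppose $\alpha\notin\Cone\{[C]:C\text{ contracted}\}$. By Hahn--Banach there is a divisor class $D\in N^1(X)_\R$ with $D\cdot\alpha<0$ and $D\cdot C\ge0$ for every contracted curve $C$, i.e.\ $D$ is $\phi$-nef. For any ample $A$ on $X$ and any $\eps>0$, the class $D+\eps A$ is $\phi$-ample, so by relative ampleness $D+\eps A+k_\eps h$ is ample on $X$ for $k_\eps\gg0$. Pairing with the pseudo-effective class $\alpha$ and using $\alpha\cdot h=0$ to kill the term $k_\eps h\cdot\alpha$ gives $D\cdot\alpha+\eps(A\cdot\alpha)\ge0$; letting $\eps\to0$ yields $D\cdot\alpha\ge0$, a contradiction. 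This establishes the Strong Conjecture~\ref{conj2} for curves. For Conjecture~\ref{conj5} and the Weak Conjecture~\ref{conj1} the same separation does not suffice, since a weakly pseudo-effective or homology class need not be algebraic; here I would instead represent $\alpha$ by a closed positive current $T$, observe that $\phi_*T$ is a closed positive current on $\P^n$ whose cohomology class $\phi_*\alpha$ vanishes—hence $\phi_*T=0$, as a zero-class positive current on $\P^n$ has zero mass—and deduce that $T$ is ``vertical''. A slicing of $T$ over $\P^n$ then exhibits $[T]$ as a limit of combinations of fiber classes, all of which are classes of contracted curves.

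For divisors I would combine the duality theorem of Boucksom--Demailly--P\u{a}un--Peternell (the pseudo-effective cone of divisors is dual to the closed cone of movable curves) with the divisorial Zariski decomposition $\alpha=P+N$, where $N=\sum a_iD_i$ is effective and $P$ is modified nef. Since $\gamma:=h^{\dim X-1}$ is a nef, hence movable, curve class and $\alpha\cdot\gamma=0$, while $P\cdot\gamma\ge0$ and $N\cdot\gamma\ge0$, both intersection numbers vanish; as each $a_i>0$ and $D_i\cdot\gamma\ge0$, this forces $D_i\cdot\gamma=0$, i.e.\ each $D_i$ is contracted, so $N$ already lies in the cone generated by contracted divisors. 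It remains to treat the positive part $P$, and this is where I expect the real difficulty.

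\textbf{Main obstacle.} The crux is to show that a modified nef class $P$ with $P\cdot h^{\dim X-1}=0$ is a non-negative combination of contracted divisors---morally, that $P$ is a pullback class. When $\phi$ is generically finite ($h$ nef and big), I would push $P$ forward to the normal image $Z$ of the Stein factorization, where the projection formula gives $g_*P\cdot A^{\dim Z-1}=0$ for $A$ ample on $Z$; strict positivity of an ample power on non-zero pseudo-effective classes forces $g_*P=0$, and a movable class contracted to zero by a birational morphism must vanish, so $P=0$ and $\alpha=N$. When the fibers are positive-dimensional, $h$ is no longer big: restricting to a general fiber $F$ shows $P|_F$ has degree $0$ and so is numerically trivial on fibers, and one then identifies $P$ with a non-negative multiple of the pullback class $h=[\phi^{-1}(\text{hyperplane})]$---itself the class of a contracted divisor---modulo genuinely $\phi$-exceptional divisors. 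Controlling this positive part uniformly, and in the analytic case carrying out the current-slicing above rigorously (including the passage to the limit and the algebraicity of the residual part, precisely the point at which Conjecture~\ref{conj5} \emph{fails} to be available for divisors), are the steps I expect to be delicate.
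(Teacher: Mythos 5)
Your argument for the Strong Conjecture \ref{conj2} for curves is correct and is essentially the paper's proof of Theorem \ref{curvecase}: separation in $N_1(X)_\R$ plus the relative Kleiman criterion (\cite[Theorem 1.44]{km}); your $\eps A$-perturbation merely replaces the choice of a separating divisor class strictly positive on the contracted cone. Your treatment of the negative part in the divisor case (pairing the $\sigma$-decomposition $\alpha=P_\sigma+N_\sigma$ against the movable class $h^{\dim X-1}$ and using Boucksom--Demailly--P\u{a}un--Peternell duality) is also a valid, and genuinely different, way to see that $N_\sigma$ lies in the cone of contracted divisors. The problems are in the two places you defer.

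First gap: curves, Weak Conjecture \ref{conj1} and Conjecture \ref{conj5}. Verticality of the limiting current $T$ does not give what you claim. What verticality yields (this is how the paper argues, via Lemma \ref{lefinalcurrent}) is that $[T]$ annihilates every closed $2$-form vanishing on the fibers, i.e.\ that $[T]$ lies in the span of the subspaces $j_{y*}H_2(X_y,\R)$. But $H_2(X_y,\R)$ contains transcendental classes, so ``fiber homology classes'' are not ``classes of contracted curves''; your sentence identifying the two is exactly the missing step, and no slicing of $T$ can supply it, since a slice is only a positive current on a fiber, whose class need not be algebraic. The paper bridges this with a separate Hodge-theoretic lemma: the map $J=\sum_i j_{y_i*}$ is a morphism of mixed Hodge structures, strict for the weight filtration; since $\alpha$ is a (real) Hodge class in the pure part, it comes from Hodge classes in $W_{\rm pure}H_2(X_{y_i},\Q)$, and degree-$2$ Hodge classes there are algebraic after desingularizing the fibers. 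Without this, your argument proves membership in the wrong, generally larger, space. (A secondary issue: in Conjecture \ref{conj1} the variety $X$ may be singular, so ``represent $\alpha$ by a closed positive current on $X$'' does not parse; the paper works on a smooth neighborhood $U\subset Y\times\P^N$ retracting onto $X$.)

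Second gap: divisors, positive part, case $c=1$. You reduce to showing that a modified nef class $P$ with $P\cdot[\mathrm{fiber}]=0$ lies in the closed cone of contracted divisors, and you then assert that $P$ ``is identified with a non-negative multiple of $h$ modulo $\phi$-exceptional divisors.'' That identification \emph{is} the theorem in this case, not a lemma: numerical triviality on the general fiber says nothing about what happens over the locus of $\P^n$ where the fibers degenerate (multiple or non-reduced fibers), and there is no duality description of the movable cone of \emph{divisors} that would let BDPP finish the job. The paper needs real machinery precisely here: the Leray spectral sequence of $\phi$ over the smooth locus, Deligne degeneration, the Hamm--L\^e theorem (Theorem \ref{hammutile}) and the curve case to get the weak statement $\alpha=\phi^*\beta'+\alpha'$ (Theorem \ref{theodiv}), and then flattening plus a comparison of $\sigma$-decompositions (\cite[Lemma 2.16]{GL}) to get non-negativity (Theorem \ref{divcone}). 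Your $c=0$ sketch is closer to workable but also cuts corners: intersection-theoretic statements on the singular Stein image $Z$ should be replaced by projection-formula computations on $X$, and the fact you invoke (``a movable class killed by a birational morphism vanishes'') is itself a Nakayama-type assertion requiring proof; the paper instead quotes Koll\'ar--Lazarsfeld (\cite[Corollary 13]{KL}) for exceptional divisors.
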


In other words, if $\phi:X\to  Y$ is a morphism
between projective varieties,  a pseudo-effective degree-2 homology class
annihilated by $\phi_*$ belongs to the vector space generated by
classes of  contracted curves. If $X$ is smooth, it belongs to the closed convex cone generated by these classes and the same result holds for divisor classes.

The proof of this theorem occupies sections \ref{seccurves} (for the curve case) and
\ref{secdivisors} (for the divisor case).

Finally, we  show in section \ref{secpsefan}
that  {\em Conjecture \ref{conj5}   implies a strong form of the generalized Hodge conjecture
  for
  smooth projective varieties $X$  such that $H^{i,0}(X)=0$  for $i>0$} (see Theorem \ref{th65}).
 This is similar  to the implication proved in  \cite{voi}, where effectivity of certain strongly positive cycles
on Fano variety of lines of complete intersections is proved
to imply the generalized Hodge conjecture for coniveau-2 complete intersections in projective spaces.

\subsection*{Acknowledgements} We thank Mihai Fulger for pointing out an error in a previous version of this article.

\section{The finite case}\label{sf}

A first evidence for our conjectures is provided by the case where
no subvarieties are contracted by $\phi$.

\begin{prop}\label{profinising}
Let $\phi: X\rightarrow Y$ be a
finite morphism between projective varieties. If $\alpha\in H_{2k}(X,\R) $ is pseudo-effective and
$\phi_*\alpha=0$ in $H_{2k}(Y,\R)$, then $\alpha=0$.\end{prop}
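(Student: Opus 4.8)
The plan is to exploit the fact that, $\phi$ being finite, the pullback of an ample class stays ample, and to test $\alpha$ against a top power of such a class. Fix an ample class $H\in H^2(Y,\R )$. Since $\phi$ is finite, $\phi^*H$ is ample on $X$; after replacing $H$ by a multiple and using the embedding it defines, I represent $\phi^*H$ by a Kähler form $\omega$ (a restriction of the Fubini–Study form). The whole argument rests on comparing, for an effective cycle, its $\phi^*H$-degree with its volume.

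First I would record the cohomological vanishing. As $\phi^*$ is a ring homomorphism, $(\phi^*H)^k=\phi^*(H^k)$, and the projection formula $\phi_*(\alpha\cap\phi^*(H^k))=(\phi_*\alpha)\cap H^k$ gives, after taking degrees, $\langle(\phi^*H)^k,\alpha\rangle=\langle H^k,\phi_*\alpha\rangle=0$, using the hypothesis $\phi_*\alpha=0$.

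Next comes the positivity input. By definition $\alpha$ is a limit $\alpha=\lim_m\alpha_m$ of effective classes $\alpha_m=\sum_i a_{m,i}[Z_{m,i}]$ with $a_{m,i}\ge 0$ and $Z_{m,i}$ irreducible of dimension $k$. Evaluating the continuous linear form $\langle(\phi^*H)^k,-\rangle$ yields $\sum_i a_{m,i}\int_{Z_{m,i}}\omega^k\to\langle(\phi^*H)^k,\alpha\rangle=0$. By Wirtinger's theorem $\int_{Z_{m,i}}\omega^k=k!\,\mathrm{vol}(Z_{m,i})$, so the total volume (equivalently, the mass of the positive current $\sum_i a_{m,i}[Z_{m,i}]$) tends to $0$. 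Since $X$ is compact, for every closed $2k$-form $\beta$ there is a constant $C_\beta$ with $|\int_Z\beta|\le C_\beta\,\mathrm{vol}(Z)$ for every such $Z$ (Wirtinger again), whence $|\langle[\beta],\alpha_m\rangle|\le C_\beta\sum_i a_{m,i}\,\mathrm{vol}(Z_{m,i})\to 0$. Because the de Rham pairing $H^{2k}(X,\R )\times H_{2k}(X,\R )\to\R$ is perfect, this forces $\alpha_m\to 0$, and hence $\alpha=0$.

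The heart of the matter, and the only place where finiteness of $\phi$ is genuinely used, is the ampleness of $\phi^*H$: it makes the $\phi^*H$-degree a positive multiple of the total mass, so that its vanishing forces the effective representatives to degenerate to zero mass, dragging their homology classes to $0$ as well. The one point to treat carefully is the implication ``total mass $\to 0\Rightarrow$ class $\to 0$'': it is transparent when $X$ is smooth (de Rham theory together with Wirtinger, as above), and in the possibly singular case one reduces to this either by passing to a resolution of singularities or by regarding the $[Z_{m,i}]$ as positive currents and invoking continuity of the class map in the mass topology. Equivalently, one may simply quote that an ample class defines a strictly positive linear form on the (salient) pseudo-effective cone of $k$-dimensional classes, which is exactly what the Wirtinger computation establishes.
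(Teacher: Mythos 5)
Your overall strategy coincides with the paper's: finiteness is used only through the ampleness of $\phi^*H$, the projection formula kills $\langle(\phi^*H)^k,\alpha\rangle$, and Wirtinger-type positivity converts the vanishing of this degree into vanishing of mass, hence of the class. For \emph{smooth} $X$ your argument is complete and correct. But the proposition is stated for arbitrary projective varieties, and this generality is essential (the paper applies it, in Corollary \ref{profinsing}, to the possibly singular normal variety $Z$ appearing in a Stein factorization). For singular $X$, your final step --- ``total mass $\to 0$ implies class $\to 0$'' --- is exactly where the content of the paper's proof lies, and your treatment of it has a gap: on a singular $X$ there are no ``closed $2k$-forms on $X$'' and no perfect de Rham pairing, so the estimate $|\langle[\beta],\alpha_m\rangle|\le C_\beta\,\mathrm{vol}$ has no meaning as written.

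Of the two repairs you propose, the first (pass to a resolution $\pi:\widetilde X\to X$) does not work: the composition $\phi\circ\pi$ is no longer finite, the class $\pi^*\phi^*H$ is only nef, so zero degree against its top power no longer forces zero mass (cycles inside the exceptional locus pair to zero with it while having nonzero class), and there is no way to lift the pseudo-effective class $\alpha$ to a pseudo-effective class upstairs with controlled mass; this is precisely the failure mode that finiteness is supposed to exclude. The second repair (view $\sum_i a_{m,i}[Z_{m,i}]$ as positive currents and use continuity of the class map in the mass norm) is the right one --- it is essentially the paper's proof --- but it requires the device you do not supply: a neighborhood $U$ of $i(X)$ in $\P^N$ which deformation retracts onto $X$. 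With such a $U$, every class in $H^{2k}(X,\R)\cong H^{2k}(U,\R)$ is represented by a closed smooth form on $U$, your mass estimate applies to the currents regarded as compactly supported positive currents on $U$, and one concludes $i_{U*}\alpha=0$ in $H_{2k}(U,\R)$, whence $\alpha=0$ because $i_{U*}:H_{2k}(X,\R)\isomto H_{2k}(U,\R)$ is an isomorphism. (The paper packages this as Lemma \ref{leposcurrent}: the limit positive current $T$ on $U$ satisfies $\int_T\omega^k=0$, hence $T=0$, hence its class $i_{U*}\alpha$ vanishes.) Without the retracting neighborhood, pairing against forms defined on $\P^N$ alone would only give $i_*\alpha=0$ in $H_{2k}(\P^N,\R)$, which is just the degree statement you already had and says nothing when $H_{2k}(X,\R)$ has dimension larger than one.
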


\begin{proof} Let $L$ be an ample line bundle on $Y$. Then $\phi^*L$ is ample on $X$. Choose a projective embedding
$i:X\hookrightarrow \P ^N$ such that
$ i^*\cO _{\P ^N}(1)= \phi^*L^{\otimes m}$  for some $m>0$, and let $\omega\in
A^{1,1}(\P^N,\R)$ be a positive Chern form of
$ \cO _{\P ^N}(1)$. As the class $[\omega]$ of
$\omega$ is $c_1(\cO _{\P ^N}(1))$, its restriction to
$X$ is $c_1(\phi^*( L^{\otimes m}))$, hence we have
$$\langle \alpha,[\omega]\vert^k_X\rangle_X=\langle\phi_*\alpha,c_1(L ^{\otimes m})^k\rangle_Y=0.$$
On the other hand, we have
$$\langle\alpha,[\omega]\vert^k_X\rangle_X=\langle i_*\alpha,[\omega]^k\rangle_{\P^N}.$$ Note that $\alpha$
being pseudo-effective, $i_*\alpha$ is pseudo-effective, and thus
$i_*\alpha$ is the class of a positive current $T$ of type $(k,k)$.
We now use the following standard result.

\begin{lemm}\label{leposcurrent} Let $T$ be a compactly supported positive current of type $(k,k)$ on a
  complex manifold endowed with a positive $(1,1)$-form
$\omega$. If $\int_T\omega^k=0$, then $T=0$.
\end{lemm}

We apply this lemma to a neighborhood $U$ of $X$ in $\P ^N$
which is a deformation retract of $X$. Let us denote by $i_U$ the
inclusion of $X$ in $U$. The homology class $i_{U*}\alpha\in
H_{2k}(U,\R)$ is the homology class of the compactly supported closed positive
current $T$ on $U$, hence it is $0$ by the lemma above. On the other
hand,     $i_{U*}:H_\bullet(X,\R)\isomto
H_ \bullet(U,\R)$ is an isomorphism, because $U$ retracts onto $X$. Thus $\alpha=0$ in
$H_{2k}(X,\R)$.
\end{proof}

\begin{rema}
The same proof   applies in the setting of {\em Conjecture} \ref{conj5}, which   {\em therefore holds true for finite morphisms.}
\end{rema}

We use Proposition \ref{profinising} to show the following.

\begin{coro} \label{profinsing} In order to prove Conjecture
\ref{conj1}, it suffices to prove it for morphisms with connected
fibers.
\end{coro}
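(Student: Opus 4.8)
The goal is to reduce Conjecture~\ref{conj1} for an arbitrary morphism $\phi\colon X\to Y$ to the case of morphisms with connected fibers. The natural tool is the \emph{Stein factorization} of $\phi$, which writes $\phi$ as a composition $X\xrightarrow{\psi} Y'\xrightarrow{f} Y$, where $\psi\colon X\to Y'$ is a proper morphism with connected fibers (and $Y' = \Spec(\phi_*\mathcal{O}_X)$ is projective) and $f\colon Y'\to Y$ is a finite morphism. The plan is to assume Conjecture~\ref{conj1} holds for $\psi$ and deduce it for $\phi$ using Proposition~\ref{profinising} applied to the finite map $f$.

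So suppose $\alpha\in H_{2k}(X,\R)$ is pseudo-effective with $\phi_*\alpha = 0$. Since $\phi_* = f_*\circ\psi_*$, we have $f_*(\psi_*\alpha) = 0$ in $H_{2k}(Y,\R)$. First I would observe that $\psi_*\alpha$ is pseudo-effective in $H_{2k}(Y',\R)$, because the pushforward of a pseudo-effective class (a limit of effective cycle classes) under a morphism of projective varieties is again pseudo-effective. Then, because $f$ is finite, Proposition~\ref{profinising} forces $\psi_*\alpha = 0$ in $H_{2k}(Y',\R)$. Now $\psi$ has connected fibers, so by our assumption Conjecture~\ref{conj1} holds for $\psi$: the class $\alpha$ lies in the real vector space (resp.\ closed convex cone) spanned by classes of $k$-dimensional subvarieties $Z\subset X$ that are contracted by $\psi$, meaning $\dim(\psi(Z))<\dim(Z)$.

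The remaining point is to check that any subvariety $Z$ contracted by $\psi$ is also contracted by $\phi$, so that these generators are admissible for the conclusion about $\phi$. This is immediate: since $f$ is finite, $\dim(f(\psi(Z))) = \dim(\psi(Z))$, and hence $\dim(\phi(Z)) = \dim(f(\psi(Z))) = \dim(\psi(Z)) < \dim(Z)$. Thus every contracted-by-$\psi$ subvariety is contracted by $\phi$, and $\alpha$ lies in the span (resp.\ cone) generated by $k$-dimensional subvarieties of $X$ contracted by $\phi$, as required. The argument respects both the weak and the strong forms of the conjecture, since pushforward is linear and preserves the cone structure.

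The only genuinely substantive input is Proposition~\ref{profinising}, which provides the vanishing $\psi_*\alpha = 0$ from the finiteness of $f$; everything else is formal bookkeeping about Stein factorization and dimensions of images. The main thing to verify carefully is that pushforward of pseudo-effective classes is pseudo-effective, but this follows directly from the definition of the pseudo-effective cone as the closed convex cone generated by effective cycle classes, since $\psi_*$ of an effective cycle is effective and $\psi_*$ is continuous. I therefore expect no serious obstacle: the corollary is essentially a clean consequence of the finite case combined with Stein factorization.
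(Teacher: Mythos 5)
Your proof is correct and follows exactly the paper's argument: Stein factorization, pseudo-effectivity of $\psi_*\alpha$, vanishing via Proposition~\ref{profinising} applied to the finite part, and the observation that subvarieties contracted by $\psi$ are contracted by $\phi$. The only difference is that you spell out two small points the paper leaves implicit (pushforward preserves pseudo-effectivity, and finiteness of $f$ preserves dimensions of images), which is harmless.
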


\begin{proof}  Let
$\phi:X\rightarrow Y$ be a morphism, where $X$ and $Y$ are
projective, and let $\alpha\in H_{2k}(X,\R)$ be a
pseudo-effective class such that $\phi_*\alpha=0$. We use the Stein factorization
$\phi =h\circ\psi  $, where $\psi :X\rightarrow Z$ has connected
fibers and $h:Z\rightarrow Y$ is finite. The class
$\psi_*\alpha$ is pseudo-effective on $Z$, and it is annihilated by
$h_*$. By Proposition  \ref{profinising}, $\psi_*\alpha=0$. If Conjecture \ref{conj1} is true for
morphisms with connected
fibers, $\alpha$ is in the real vector space (resp.\ in the closed convex cone) spanned by classes
of subvarieties of $X$ contracted  by $\psi$, hence by $\phi$.
\end{proof}

   \section{Reduction to the surjective case}\label{ss}

\begin{prop} \label{lemma1} In order to prove any of the conjectures stated in the introduction, it suffices to do so when   $\phi$ is  surjective and $Y$ is a projective space.
\end{prop}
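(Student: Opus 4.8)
The plan is to perform two successive reductions, each of which replaces the target by a more convenient variety while leaving $X$ --- and, crucially, the family of subvarieties of $X$ contracted by the morphism --- unchanged. Throughout, the only substantial input is Proposition \ref{profinising} (together with its weakly pseudo-effective analogue recorded in the remark following it), which guarantees that a pseudo-effective class killed by a \emph{finite} pushforward already vanishes.

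First I would reduce to the surjective case. Write $Y'=\phi(X)$, a closed (possibly singular) subvariety of $Y$, and factor $\phi=j\circ\phi'$, where $\phi'\colon X\to Y'$ is surjective and $j\colon Y'\hra Y$ is the inclusion. Since $j$ is a closed immersion, hence finite, and since the pushforward of a pseudo-effective class is pseudo-effective, Proposition \ref{profinising} applied to $j$ shows that $\phi_*\alpha=j_*\phi'_*\alpha=0$ forces $\phi'_*\alpha=0$. Moreover, as $j$ is injective, $\dim\phi(Z)=\dim\phi'(Z)$ for every subvariety $Z\subset X$, so $Z$ is contracted by $\phi$ if and only if it is contracted by $\phi'$. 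Hence the conjecture for $\phi'$ yields the conjecture for $\phi$.

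Next I would reduce a surjective morphism $\phi'\colon X\to Y'$ to one whose target is a projective space. Choosing an embedding $Y'\hra\P^M$ and a general linear projection restricting to a finite surjective morphism $\pi\colon Y'\to\P^n$ with $n=\dim Y'$ (Noether normalization), set $\psi=\pi\circ\phi'\colon X\to\P^n$, which is surjective. As before, $\pi$ finite gives $\dim\pi(\phi'(Z))=\dim\phi'(Z)$, so the contracted subvarieties are again unchanged, and since $\psi_*\alpha=\pi_*\phi'_*\alpha$ the vanishing $\phi'_*\alpha=0$ implies $\psi_*\alpha=0$. Thus the conjecture for $\psi$ (surjective, target $\P^n$) implies it for $\phi'$, and combined with the first step, for $\phi$. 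This settles the homological Conjecture \ref{conj1}, and the weakly pseudo-effective Conjecture \ref{conj5} is handled verbatim, using the weak analogue of Proposition \ref{profinising}.

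The main obstacle is the cohomological setting of Conjectures \ref{conj2} and \ref{conj5}, where $X$ and $Y$ are required smooth but the intermediate image $Y'=\phi(X)$ is typically singular, so the Gysin maps of $\phi'$ and $\pi$ are not literally defined. I would circumvent this by transporting the whole argument to homology via Poincar\'e duality: writing $\tilde\alpha=PD_X(\alpha)$, the cohomological condition $\phi_*\alpha=0$ is equivalent, because $Y$ is smooth, to the homological condition $\phi_*\tilde\alpha=0$, and the two reduction steps above are purely homological and never require $Y'$ to be smooth. Since the final target $\P^n$ is smooth, the Gysin map of $\psi$ is defined, and Poincar\'e duality on $X$ and on $\P^n$ translates the homological vanishing $\psi_*\tilde\alpha=0$ back into $\psi_*\alpha=0$; applying the cohomological conjecture for $\psi$ and using that contraction by $\psi$ and by $\phi$ coincide then gives the statement for $\phi$. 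The only point to verify is the codimension bookkeeping --- the class $\alpha$ has codimension $l$ for both $\phi$ and $\psi$, even though the relative dimensions differ --- which is immediate.
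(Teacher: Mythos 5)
Your reduction uses the same geometric decomposition as the paper --- factor $\phi$ through its image $Y'=\phi(X)$ and compose with a finite morphism $\pi\colon Y'\to\P^d$ coming from a general linear projection --- but a genuinely different mechanism for the key step, namely showing that the pushforward of $\alpha$ to the new target vanishes. You use positivity twice: pushforward preserves pseudo-effectivity, and Proposition \ref{profinising} applied to the finite inclusion $j\colon Y'\hra Y$ then kills $\phi'_*\alpha$. The paper instead argues without any positivity: if $L$ is the line bundle on $Y$ inducing the projection, then $L\vert_{Y'}=\pi^*\cO_{\P^d}(1)$, so the projection formula gives
\[
\bigl\langle(\pi\circ\phi')_*\alpha,\, c_1(\cO_{\P^d}(1))^{n-l}\bigr\rangle_{\P^d}
=\bigl\langle\phi_*\alpha,\, c_1(L)^{n-l}\bigr\rangle_Y=0,
\]
and since $H_{2n-2l}(\P^d,\R)$ is at most one-dimensional and is faithfully detected by pairing against this power of the hyperplane class, this single intersection number already forces $(\pi\circ\phi')_*\alpha=0$; this is where the target being a projective space (not merely smooth) is used. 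What each approach buys: the paper's pairing argument applies to an arbitrary class, so it runs uniformly for all three conjectures and never needs pseudo-effectivity to make sense on the singular intermediate variety $Y'$; your argument is closer in spirit to the paper's Corollary \ref{profinsing}, reusing the finite case of the conjecture, and it is complete for Conjectures \ref{conj1} and \ref{conj2} (pushforward of a pseudo-effective class is indeed pseudo-effective, since $\phi_*[Z]$ is $0$ or a positive multiple of $[\phi(Z)]$, and Proposition \ref{profinising} is stated for arbitrary, possibly singular, projective varieties). The one place where your ``verbatim'' hides real content is Conjecture \ref{conj5}: the Remark following Proposition \ref{profinising} concerns finite morphisms in the smooth, cohomological (Gysin) setting, whereas you need a homological analogue for the finite morphism $j\colon Y'\to Y$ with $Y'$ singular, together with the fact that $\phi'_*$ of a weakly pseudo-effective class is weakly pseudo-effective on $Y'$ --- which first requires defining weak pseudo-effectivity for homology classes on a singular variety (say via positive currents on a neighborhood of an embedding, exactly as in the proof of Proposition \ref{profinising}). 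That extension is routine but should be spelled out, or avoided altogether by substituting the paper's pairing argument at this one step.
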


\begin{proof} Let us do it for Conjecture  \ref{conj2}. For the other two conjectures, the proof  is similar.
Set $Y':=  \phi(X)\subset Y$ and $d:=\dim (Y')$. Let us denote by
$\phi':X\rightarrow Y'$ the surjective morphism induced by $\phi$.
We choose a rational map  $ Y\dashrightarrow \P^d$ which restricts to a {\em finite morphism}
$\pi: Y'\to \P^d$. Such a map is obtained by restricting to $Y\subset \P^N$ a general linear projection $\P^N\dashrightarrow \P^d$.
The morphism $\pi\circ \phi$ is surjective and   contracts the same subvarieties as $\phi$.
Set $n:=\dim (X)$ and $c :=\dim (X)-\dim (Y)$. If $\phi_* \alpha =0$ in $H^{2l-2c}(Y,\R )$, then
for any line bundle $L$ on $Y$, we have
\begin{equation*}\label{crochet1}
\langle \phi_*\alpha, c_1(L)^{n-l}\rangle_Y=0,
\end{equation*}
 which implies
\begin{equation}\label{crochet2}
\langle\phi'_*\alpha, c_1(L\vert_{  Y'})^{n-l}\rangle_{Y'}=0.
\end{equation}
The brackets in the above equations  are computed as follows:
 the class $\phi_*\alpha \in H^{2l-2c}(Y,\R )$ is, by definition of the Gysin morphism
 $\phi_*$, the Poincar\'{e} dual on $Y$ of the homology class
 $\phi_*(PD_X(\alpha))\in H_{2n-2l}(Y,\R )$, where $PD_X(\alpha)\in H_{2n-2l}(X,\R )$ is the Poincar\'{e} dual on $X$
 of the cohomology class $\alpha$.
 The intersection number $\langle\phi_*\alpha, c_1(L)^{n-l}\rangle_Y$ is then given by the pairing
 between $H_{2n-2l}(Y,\R )$ and $H^{2n-2l}(Y,\R )$. 
 Similarly, the intersection number $\langle\phi'_*\alpha, c_1(L\vert_{  Y'})^{n-l}\rangle_{Y'}$ is  given by the pairing
 between $H_{2n-2l}(Y',\R )$ and $H^{2n-2l}(Y',\R )$.

 We apply (\ref{crochet2}) to the line bundle $L$ giving the rational projection $Y\dashrightarrow \P^d$. On $Y'$, we   have $L\vert_{Y'}=\pi^*\cO_{\P^d}(1)$, and thus we get from the projection formula
 \begin{equation}\label{crochet3}
0= \langle\phi'_*\alpha, c_1(L\vert_{  Y'})^{n-l}\rangle_{Y'}=
\langle(\pi\circ\phi')_*\alpha, c_1(\cO_{\P^d}(1))^{n-l}\rangle_{\P^d} ,
\end{equation}
where the second intersection number is   computed using the pairing between
$H_{2n-2l}(\P^d,\R )$ and $H^{2n-2l}(\P^d,\R )$.
Formula (\ref{crochet3}) says that $(\pi\circ\phi')_*(PD_X(\alpha))=0$ in
$H_{2n-2l}(\P^d,\R )$, hence $(\pi\circ\phi')_*  \alpha =0$ in
$H^{2d-(2n-2l)}(\P^d,\R )$.
The morphism $\pi\circ \phi'$ is surjective and   $X$ and $\P^d$ are both
smooth.

Hence,
if we know Conjecture \ref{conj2}  when the morphism is surjective onto a projective space,
we conclude that the class $\alpha$ belongs to the
real vector space (resp.\ to the closed convex cone) spanned by  classes of subvarieties contracted by $\pi\circ \phi'$, which
is the same as the real vector space (resp.\ to the closed convex cone) spanned by classes of subvarieties  contracted by $ \phi$.
  \end{proof}

\section{The  curve case}\label{seccurves}

The following theorem solves  the Strong Conjecture
\ref{conj2} in the curve case. It is a straightforward application of the relative Kleiman  criterion for ampleness.

\begin{theo} \label{curvecase} Let $X$ and $Y$ be  smooth complex projective varieties  and let $\phi:X\rightarrow Y$
be a  morphism. Set $n:=\dim(X)$. Then any pseudo-effective  class $\alpha\in H^{2n-2}(X,\R )$
such that  $ \phi_* \alpha =0$
belongs to the closed convex cone spanned by classes of curves on $X$ which are contracted by $\phi$.
\end{theo}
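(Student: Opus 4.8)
The plan is to prove the statement in its equivalent dual form, using the bipolar theorem. Write $P\subseteq N_1(X)_\R$ for the closed convex cone generated by the classes of curves contracted by $\phi$; this is exactly the relative cone of curves $\overline{NE}(X/Y)$, and it is the cone in which we must place $\alpha$. Since a pseudo-effective class in $H^{2n-2}(X,\R)$ is, under Poincar\'e duality, a limit of nonnegative combinations of classes of algebraic curves, $\alpha$ lies in the finite-dimensional space $N_1(X)_\R$, and $P$ is a closed convex cone therein. By the bipolar theorem applied to the perfect pairing between $N_1(X)_\R$ and $N^1(X)_\R$, it suffices to show that $D\cdot\alpha\geq 0$ for every $\R$-divisor class $D$ in the dual cone $P^\vee$, that is, for every $D$ with $D\cdot z\geq 0$ for all contracted curve classes $z$.

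So I would fix such a $D$, together with an ample class $A$ on $X$ and an ample class $H$ on $Y$. For each $\eps>0$ the class $D+\eps A$ satisfies $(D+\eps A)\cdot z>0$ for every nonzero $z\in P$, since $D\cdot z\geq 0$ while $A\cdot z>0$. By the relative Kleiman ampleness criterion, $D+\eps A$ is therefore $\phi$-ample. The next step is to invoke the standard fact that a $\phi$-ample $\R$-divisor class becomes ample on $X$ after adding a sufficiently large multiple of the pullback of an ample class from $Y$: thus $(D+\eps A)+t\,\phi^*H$ is ample on $X$ for $t\gg 0$.

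Now I would pair this ample class with $\alpha$. An ample class is nef, and a pseudo-effective curve class pairs nonnegatively with every nef class, so $\bigl((D+\eps A)+t\,\phi^*H\bigr)\cdot\alpha\geq 0$. But $\phi^*H\cdot\alpha=H\cdot\phi_*\alpha=0$ by the projection formula and the hypothesis $\phi_*\alpha=0$, so the $t$-term drops out and $(D+\eps A)\cdot\alpha\geq 0$. Letting $\eps\to 0$ gives $D\cdot\alpha\geq 0$, as required; the bipolar theorem then yields $\alpha\in P$.

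The argument is short, and its only delicate points are bookkeeping ones: checking that the relative Kleiman criterion and the ``relatively ample plus large pullback is ample'' lemma are available for $\R$-divisor classes and not merely integral ones, and confirming that $\alpha$ and $P$ genuinely lie in the finite-dimensional $N_1(X)_\R$ so that the bipolar duality with $N^1(X)_\R$ is legitimate. The main conceptual point, and the step I expect to carry the weight, is the translation (via the projection formula) of the hypothesis $\phi_*\alpha=0$ into the vanishing $\phi^*H\cdot\alpha=0$ that kills the pullback term: this is precisely what lets a $\phi$-ample test class detect $\alpha$ as if it were genuinely ample, and hence forces $\alpha$ into the relative cone.
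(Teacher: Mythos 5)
Your proposal is correct and is essentially the paper's own argument: the paper runs the identical duality by contradiction, producing a separating divisor class that is positive on the cone of contracted curves, then applies the relative Kleiman criterion, adds $m\phi^*H$ for $m\gg 0$ to obtain an ample class, and uses $H\cdot\phi_*\alpha=0$ to contradict the (absolute) Kleiman criterion. Your bipolar-theorem formulation with the $\varepsilon A$ perturbation is just the direct, non-contradiction phrasing of the same ingredients, so the two proofs coincide in substance.
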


Before beginning the proof, we introduce some notation. For a smooth projective variety $X$, we denote by $N^1(X)$ its N\'eron-Severi group, \ie, the free abelian group of divisors on $X$ modulo numerical equivalence. By the Lefschetz $(1,1)$-theorem, the real vector space $N^1(X)_\R:=N^1(X)\otimes\R$ is isomorphic to $ H^{1,1}(X)_\R$.

Similarly, we denote by $N_1(X)$   the group of 1-cycles on $X$ modulo numerical equivalence. By the Lefschetz $(1,1)$-theorem and the hard Lefschetz theorem, the real vector space $N_1(X)_\R:=N_1(X)\otimes\R$ is isomorphic to $ H^{n-1,n-1}(X)_\R$ and it is dual to $N^1(X)_\R$. It contains the closed convex cone spanned by classes of curves in $X$, called the Mori cone and denoted by $\overline{NE}(X)$.

\begin{proof}[Proof of the theorem] In the vector space $N_1(X)_\R$, we have two closed convex cones: the intersection   $\cC_1=\overline{NE}(X)\cap \Ker(\phi_*)$ and  the closed convex cone $\cC_2$ spanned by classes of curves contracted by $\phi$. We  have $\cC_2\subset \cC_1$ and we need to prove that these cones are equal.

We argue by contradiction. If $\cC_2\subsetneq \cC_1$, there exists a class $d\in N^1(X)_\R=N_1(X)_\R^\vee$ such that $d$ is positive on $\cC_2 \setminus\{0\}$, but $d\cdot \alpha<0$ for some $\alpha\in \cC_1$. We may even  choose for $d$ the class of a divisor $D$ on $X$. The divisor $D$ is then $\phi$-ample by  the relative Kleiman  criterion (\cite[Theorem 1.44]{km}). If $H$ is an ample divisor on $Y$,   the divisor $mp^*H +D$ is therefore ample for $m\gg 0$. But we have
$$(mp^*H +D)\cdot \alpha =m( H  \cdot \phi_*\alpha)+  (d \cdot \alpha) <0,$$
 which, by the usual Kleiman  criterion, contradicts $\alpha\in \overline{NE}(X)$.
  \end{proof}

We now prove  Conjecture \ref{conj1} in its weak form for
  degree-2  homology classes. Note that the proof above  does not apply because we do not
have in the singular case
 a duality between the real vector   space spanned by homology classes of curves and the space $N^1(X)_\R$. However, the proof given below does apply in the setting of {\em Conjecture} \ref{conj5}, which   {\em therefore holds true in the curve case.}

\begin{theo} \label{theo5.4} Let $X$ and $Y$ be  complex projective varieties and let $\phi:X\rightarrow Y$
be a  morphism.   Then any pseudo-effective   class   $ \alpha\in
H_{2}(X,\R )$ such that $\phi_*\alpha=0$ in $H_{2}(Y,\R )$ belongs
to the real vector space  spanned by
classes of  curves in $X$ which are contracted by
$\phi$.
\end{theo}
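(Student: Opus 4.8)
The argument I have in mind is analytic and parallels that of Proposition \ref{profinising}, with the ample form used there replaced by the pullback of a form from the base; this is also what makes it apply to the setting of Conjecture \ref{conj5}. By Proposition \ref{lemma1} we may assume that $Y=\P^d$ and that $\phi$ is surjective. Let $\omega$ be a Fubini--Study form on $\P^d$, so that $\phi^*\omega$ is a smooth \emph{semi}positive $(1,1)$-form on $X$, positive exactly in the directions transverse to the fibers of $\phi$. As in the proof of Proposition \ref{profinising} (realizing the class on a neighborhood of $X$ in a projective embedding $i\colon X\hookrightarrow\P^N$ if needed), I represent the pseudo-effective class $\alpha$ by a closed positive current $T$ of bidimension $(1,1)$ carried by $X$; in the setting of Conjecture \ref{conj5} one works with a limit of such currents, which is why the argument below applies verbatim to weakly pseudo-effective classes.

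The vanishing $\phi_*\alpha=0$ in $H_2(\P^d,\R)=\R$ is equivalent, by the projection formula, to
$$\int_X T\wedge\phi^*\omega=\langle\alpha,\phi^*c_1(\cO_{\P^d}(1))\rangle=\langle\phi_*\alpha,c_1(\cO_{\P^d}(1))\rangle=0.$$
Since $T\ge0$ and $\phi^*\omega\ge0$, the current $T\wedge\phi^*\omega$ is a positive measure of total mass $0$, hence $T\wedge\phi^*\omega=0$. This is the relative analogue of Lemma \ref{leposcurrent}: instead of forcing $T=0$, the degeneracy of $\phi^*\omega$ along the fibers forces $T$ to be \emph{vertical}, that is, carried by directions tangent to the fibers of $\phi$.

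I then invoke the Siu decomposition $T=\sum_j\lambda_j[C_j]+R$, where the $C_j$ are irreducible curves in $X$, the $\lambda_j>0$ are generic Lelong numbers, and the residual current $R\ge0$ carries no mass on a curve. Wedging with $\phi^*\omega$ and using positivity of each summand, the identity $T\wedge\phi^*\omega=0$ forces $[C_j]\wedge\phi^*\omega=0$, i.e.\ $\int_{C_j}\phi^*\omega=0$, which forces $\phi(C_j)$ to be a point; thus every $C_j$ is contracted by $\phi$, and $\sum_j\lambda_j[C_j]$ already lies in $V$, the real vector space spanned by classes of contracted curves. It remains to prove that the residual class $[R]$ also lies in $V$.

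Controlling $[R]$ is the heart of the matter, and I expect it to be the main obstacle. The current $R$ is closed, positive, carries no curve mass, and still satisfies $R\wedge\phi^*\omega=0$, so it too is vertical. The plan is to disintegrate $R$ along the fibration $\phi$, writing $[R]=\int_{\P^d}[R_y]\,d\mu(y)$ with $R_y$ a positive current carried by the fiber $X_y=\phi^{-1}(y)$; since every curve contained in a fiber is contracted, each slice class $[R_y]$ maps into $V$ under $H_2(X_y,\R)\to H_2(X,\R)$, whence $[R]\in V$ because $V$ is closed. Justifying this slicing---the existence of a measurable family of fiberwise currents representing a vertical closed positive current, together with the compatibility of the slice classes with the fibration---is the delicate analytic step, and it is precisely this point that yields membership in the vector space $V$ rather than in the closed convex cone in the singular setting. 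An alternative route is to resolve $X$ by $\nu\colon X'\to X$, lift $\alpha$ to a pseudo-effective class on the smooth variety $X'$, apply Theorem \ref{curvecase} there, and push the resulting contracted-curve classes back down by $\nu_*$; in that route the obstacle migrates to the pseudo-effective lifting along $\nu$, whose failure to respect the cone structure (the image of the Mori cone need not be closed) again explains why only the vector-space conclusion survives.
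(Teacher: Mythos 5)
Your first half is sound and runs parallel to the paper's own opening: after Proposition \ref{lemma1} you represent the class by a closed positive current and use $\phi_*\alpha=0$ plus positivity to force the current to be vertical. (The paper does exactly this, via $p_*T=0$, Lemma \ref{leposcurrent}, and a local computation with the Hermitian matrix of coefficients; note that it must embed $X$ in $Y\times\P^N$ so that $\phi$ extends to the projection $p$ on a neighborhood $U$ --- your $\phi^*\omega$ makes no sense on a neighborhood of $X$ in $\P^N$ alone, though this is easily repaired.) The Siu decomposition then correctly places the atomic part $\sum_j\lambda_j[C_j]$ in the span of contracted curves. But the residual current $R$ is where the whole difficulty lives, and your treatment of it has two genuine gaps. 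First, the disintegration $[R]=\int_{\P^d}[R_y]\,d\mu(y)$ of a vertical closed positive current of dimension one into closed currents on the fibers is not a standard fact and is not justified (Federer-type slicing does not produce it: slices of a $2$-dimensional current over a $2d$-dimensional base would have negative dimension, and closedness of fiberwise pieces is not automatic); the paper never performs any such decomposition. Second, and more fundamentally, even granting the disintegration, your assertion that each slice class $[R_y]$ maps into $V$ ``since every curve contained in a fiber is contracted'' is a non sequitur: $[R_y]$ is merely the homology class of a positive current on the possibly singular fiber $X_y$, i.e.\ a weakly pseudo-effective class, and whether such a class lies in the span of classes of curves in $X_y$ is precisely the open pseudo-effective versus weakly pseudo-effective problem that the paper emphasizes (essentially an instance of Conjecture \ref{conj5} for the fiber). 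So the step you flag as ``delicate'' is not merely delicate; as formulated it begs the question.

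The paper closes both gaps with two ideas absent from your plan, neither of which requires decomposing the current. (i) Duality instead of disintegration: it suffices to show that the class of $T$ annihilates $K=\bigcap_{y}\Ker(H^2(U,\R)\to H^2(U_y,\R))$, the orthogonal complement in $H^2(U,\R)$ of the span of the fiber classes $j_{U_y*}H_2(U_y,\R)$. Lemma \ref{lefinalcurrent} (a partition-of-unity argument) identifies $K$ with classes of closed $2$-forms vanishing on all fibers, and verticality of $T$ kills these immediately; this proves $\alpha$ lies in the span of the $j_{y*}H_2(X_y,\R)$ with no slicing, and in fact with no need for the Siu decomposition at all. (ii) A mixed Hodge theory lemma to pass from the span of fiber \emph{homology} to the span of contracted \emph{curve} classes: since $\alpha$ is (a limit of) cycle classes it lies in the pure part and is a real Hodge class; the map $J=\sum_i j_{y_i*}$ is a morphism of mixed Hodge structures, hence strict, and on pure parts a morphism of polarized Hodge structures; therefore $\alpha$ is a real combination of Hodge classes pushed forward from the fibers, and degree-$2$ Hodge classes on desingularizations of the fibers are algebraic by Lefschetz $(1,1)$. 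This argument exploits the global Hodge-theoretic nature of $\alpha$ rather than any fiberwise positivity, which is exactly what lets it succeed where your fiberwise claim fails. Your alternative route (lifting $\alpha$ to a resolution of $X$ and applying Theorem \ref{curvecase} there) breaks down for the reason you yourself give --- the image of the pseudo-effective cone under $\nu_*$ need not be closed, so $\alpha$ need not lift --- and the paper does not use it.
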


\begin{proof} By Proposition \ref{lemma1},   we can assume that $Y$ is smooth. Next, we  prove the following result.

\begin{lemm} In order to prove Theorem \ref{theo5.4}, it suffices to show
that, under the same assumptions, the class $\alpha$ belongs to the real vector subspace $V$  of $H_2(X,\R)$ spanned by
the $j_{y*}(H_2(X_y,\R))$, for $y\in Y$, where $j_y:X_y\hra X$ is the inclusion
of the fiber $X_y$.
\end{lemm}

\begin{proof} This follows from mixed Hodge theory and is proved as follows.
The set of cycle classes in $H_2(X,\Q)$ is contained in the pure part $W_{\rm pure}H_2(X,\Q)$ of
the mixed Hodge structure on $H_2(X,\Q)$, since cycle classes lift to
cycle classes on a desingularization of $X$.
Choose finitely many points $y_1,\dots,y_N\in Y$ such that
$V$ is spanned by the spaces $j_{y_i*}(H_2(X_{y_i},\Q))$. The morphism
$$J:=\sum_ij_{y_i*}:\bigoplus_iH_2(X_{y_i},\Q)\rightarrow H_2(X,\Q)$$
is a morphism of mixed Hodge structures, hence
is strict for the weight filtration by \cite{deligneII}.
Since $\alpha\in (\Im (J)\otimes \R)\cap W_{\rm pure}H_2(X,\R)$, it belongs
to $J(\bigoplus_i W_{\rm pure} H_2(X_{y_i},\Q))\otimes \R$.
Furthermore, the morphism $J: \bigoplus_i W_{\rm pure} H_2(X_{y_i},\Q)\rightarrow  W_{\rm pure}H_2(X,\Q)$
is a morphism of polarized Hodge structures, hence maps
the space of Hodge classes $\Hdg_2(\bigoplus_i W_{\rm pure} H_2(X_{y_i},\Q))$
 onto the set of Hodge classes in $\Hdg_2(W_{\rm pure}H_2(X,\Q))$ contained in $\Im (J)$. If our class $\alpha$ belongs
 to $(\Hdg_2(W_{\rm pure}H_2(X,\Q))\cap \Im (J))\otimes \R$,  it thus belongs to the
 space
 $$\Big(\bigoplus_ij_{y_i*}\big(\Hdg_2(W_{\rm pure} H_2(X_{y_i},\Q))\big)\Big)\otimes \R\subset H_2(X,\R).$$
 To conclude, it suffices to recall that Hodge classes in $\Hdg_2(W_{\rm pure} H_2(X_{y_i},\Q))$
 are algebraic, because they come from degree-2 Hodge homology classes on a desingularization $\widetilde X_{y_i}$,
 which are   known to be algebraic on $\widetilde X_{y_i}$.
 \end{proof}

Going back to our pseudo-effective class $\alpha\in H_2(X,\R)$,
 we choose an imbedding  $i:X\hookrightarrow\P^N$ and denote by $U$ a neighborhood
 of the image of $(\phi,i)$ in $Y\times \P^N$. Thus $\phi$ is the composition of the inclusion $i'$
 of $X$ into $U$ and of the first projection $p :U\rightarrow Y$.
If $U$ is small enough, it retracts over $Y$ onto $X$, so that
we have an isomorphism $i'_*:H_2(X,\R)\isomto H_2(U,\R)$,
and for each $y\in Y$, we have an isomorphism
$$H_2(X_y,\R)\isomto H_2(U_y,\R)$$
compatible with the maps $j_{y*}$ and $j_{U_y*}$, where $j_{U_y}:U_y\hra U$ is the inclusion
of the fiber $U_y$.
It thus suffices to prove that
$i'_*\alpha\in H_2(U,\R)$ belongs to the subspace spanned by
the $j_{U_y*}(H_2(U_y,\R))$, for $y\in Y$.
 Since $U$ is smooth, we can use  currents to compute $H_2(U,\R)$. In fact,
 $\alpha$ being written as $\lim_k[C_k]_{\rm fund}$, where the $C_k$ are effective
 $1$-cycles on $X$ with $\Q$-coefficients,
 $i'_*\alpha$ is the class of the weakly positive current $T:=\lim_kT_{C_k}$, where $T_{C_k}$
 is the current of integration over $i'(C_k)$ acting on $2$-forms of $U$. The limiting current exists,
 upon taking a subsequence if necessary, by \cite[Chapter III, Proposition 1.23]{demailly}.
The current $T$ is positive, closed of type $(N+r-1,N+r-1)$,
compactly supported on $U$, and it satisfies
\begin{equation}\label{eqcurrentT}p_*T=0
\end{equation}
as a current of type $(r-1,r-1)$ on $Y$.
Indeed, the current $p_*T$ is positive on $Y$ and its homology class is equal to $\phi_*\alpha\in H_2(Y,\R)$.
The vanishing (\ref{eqcurrentT}) thus follows from Lemma \ref{leposcurrent}.
Choose a volume form $\nu$ on $Y$.
We claim that locally on $U$, we have
\begin{eqnarray}\label{eqcurrentT2} T=p^*\nu\wedge T'
\end{eqnarray}
for a current $T'$  of type $(N,N)$ supported on $X\subset U$. By (\ref{eqcurrentT2}),
 we mean that for any
$2$-form $\eta$ on $U$, one has $T(\eta)=T'(p^*\nu\wedge \eta)$.
To see this, we use the fact that $U$ is contained in the product
$Y\times \P^N$. Our local holomorphic coordinates on $U$ will be of the form
$(y_1,\ldots,y_r,z_1,\ldots,z_N)$ where $(y_1,\ldots,y_r)$ are
local holomorphic coordinates on $Y$ and $(z_1,\ldots,z_N)$ are
local holomorphic coordinates on $\P^N$.
Write
\begin{multline*}
T=\iota^{r+N-1}\sum_{i,j=1}^r f_{ij}\innt \left(\frac{\partial}{\partial y_i}\wedge \frac{\partial}{\partial \overline{y}_j} \right)( dy_1\wedge d\overline{y}_1\wedge\dots\wedge dy_r\wedge d\overline{y}_r\wedge dz_1\wedge d\overline{z}_1\wedge\dots\wedge dz_N\wedge d\overline{z}_N)\\
{}+\iota^{r+N-1}
\sum_{k,l=1}^N g_{kl} \innt \left(\frac{\partial}{\partial z_k}\wedge \frac{\partial}{\partial \overline{z}_l} \right)(dy_1\wedge d\overline{y}_1\wedge\dots\wedge dy_r\wedge d\overline{y}_r\wedge dz_1\wedge d\overline{z}_1\wedge\dots\wedge dz_N\wedge d\overline{z}_N) \\
{}+\iota^{r+N-1}\sum_{i\leq r,\ k\leq N} h_{ik}\innt \left(\frac{\partial}{\partial y_i}\wedge \frac{\partial}{\partial \overline{z}_k}\right)( dy_1\wedge d\overline{y}_1\wedge\dots\wedge dy_r\wedge d\overline{y}_r\wedge dz_1\wedge d\overline{z}_1\wedge\dots\wedge dz_N\wedge d\overline{z}_N)\\
{}+\iota^{r+N-1}\sum_{i\leq r,\ k\leq N} \overline{h_{ik}}\innt \left(\frac{\partial}{\partial \overline{y}_i}\wedge \frac{\partial}{\partial {z}_k} \right)( dy_1\wedge d\overline{y}_1\wedge\dots\wedge dy_r\wedge d\overline{y}_r\wedge dz_1\wedge d\overline{z}_1\wedge\dots\wedge dz_N\wedge d\overline{z}_N),
\end{multline*}
 where $\iota^2=-1$ and  the  coefficients are distributions. The positivity condition for $T$ says that
for any form $\alpha=\sum_i\alpha_idy_i+\sum_k\alpha_kdz_k$ of type $(1,0)$, one has  $T(\iota \alpha\wedge\overline{\alpha})\geq0$, or equivalently that the Hermitian matrix  written by blocks as
$$M=\begin{pmatrix} f_{ij}&h_{ik}\\
\overline{h_{ki}}&g_{kl}
\end{pmatrix}$$
is a semi-positive Hermitian matrix of distributions, meaning that for any $(r+N)$-uple $Z=(Z_r,Z_N)$ of complex functions,   with $Z_r=(\alpha_i)$ and $Z_N=(\alpha_k)$, $^t\overline{Z}MZ$ is a measure on $U$.

Since $p_*T=0$, we get that for any
form $\alpha=\sum_i\alpha_idy_i$ on $Y$, one has $T(p^*(\iota \alpha\wedge\overline{\alpha}))=0$. As this is the
integral over $U$ of the measure $^tZ_r(f_{ij})Z_r$, where $Z_r=(\alpha_i)$,
 we conclude that this measure is $0$,
that is $f_{ij}=0$ for all $i,j\in\{1,\dots,r\} $. The semi-positivity  of $M$ implies then
$(h_{ik})=0$, so that
$$T=\iota^{r+N-1}
 \sum_{k,l=1}^N g_{kl}\innt \left(\frac{\partial}{\partial z_k}\wedge \frac{\partial}{\partial \overline{z}_l} \right)(dy_1\wedge d\overline{y}_1\wedge\dots\wedge dy_r\wedge d\overline{y}_r\wedge dz_1\wedge d\overline{z}_1\wedge\dots\wedge dz_N\wedge d\overline{z}_N) .$$
Assuming that $\nu=\iota^r dy_1\wedge d\overline{y}_1\wedge\dots\wedge dy_r\wedge d\overline{y}_r$, this gives the desired result with
$$T'=\iota^{N-1}\sum_{k,l=1}^N g_{kl} \innt\left(\frac{\partial}{\partial z_k}\wedge \frac{\partial}{\partial \overline{z}_l} \right)( dz_1\wedge d\overline{z}_1\wedge\dots\wedge dz_N\wedge d\overline{z}_N).$$

\begin{lemm} \label{lefinalcurrent}The subspace $K:=\bigcap_{y\in Y}\Ker(H^2(U,\R)\rightarrow H^2(U_y,\R))$ is the set of de Rham cohomology classes of closed $2$-forms
on $U$ vanishing on all fibers $U_y$.
\end{lemm}

\begin{proof} Let $\alpha\in K$. Then for any $y\in Y$, there is an open subset
$V_y\subset Y$ containing $y$ such that $\alpha\vert_{  U_{V_y}}=0$ (we use for this the fact that for any $y\in Y$, there is a neighborhood
$W_y$ of $y$ in $Y$  such that $U_{W_y}$ has the same homotopy type as $U_y$). The class
$\alpha$ is represented by a closed $2$-form
$\tilde{\alpha}$ and we have
$$\tilde \alpha\vert_{ U_{V_y}}=d\beta_y.$$
Let $(f_i)$ be a partition of unity
on $Y$ relative to an open covering of $Y$ by finitely many of the $V_y$, say $V_{y_1},\ldots,\, V_{y_M}$.
Then $\tilde{\alpha}-d(\sum_i(p^*f_i)\beta_{y_i})$ has the same class as $\tilde{\alpha}$
and it is equal to
$$\tilde{\alpha}-\sum_ip^*f_id\beta_{y_i}-\sum_ip^*df_i\wedge \beta_{y_i}=-\sum_ip^*df_i\wedge \beta_{y_i},$$
which clearly vanishes on each fiber $U_y$.
\end{proof}

We now finish the proof of Theorem \ref{theo5.4}. We wanted to prove that the homology class of $T$
belongs to the space spanned by the $j_{U_y*}H_2(U_y,\R)$ for $y\in Y$.
The orthogonal of this space in $H^2(U,\R)$ is
$K$. The result  is thus equivalent to saying
that for any $\eta\in K$, one has $T(\eta)=0$.
By Lemma \ref{lefinalcurrent}, $\eta$ is the class of a closed form $\widetilde{\eta}$ vanishing on $U_y$  for any $y$.
On the other hand, it follows  from (\ref{eqcurrentT2}) that for any $2$-form
$\widetilde{\eta}$ on $U$ vanishing on the fibers $U_y$,  we have $T(\widetilde{\eta})=0$.
\end{proof}

\section{The  divisor case\label{secdivisors}}

\subsection{The Weak Conjecture \ref{conj2} for divisors}

The following theorem solves Conjecture
\ref{conj2} in its weak form in the divisor case.

\begin{theo} \label{theodiv} Let $X$ and $Y$ be  smooth complex projective varieties  and let $\phi:X\rightarrow Y$
be a  morphism. Then any pseudo-effective  class $\alpha\in H^2(X,\R )$
such that  $ \phi_* \alpha =0$
belongs to the real vector space spanned by classes of divisors on $X$ which are contracted by $\phi$.
\end{theo}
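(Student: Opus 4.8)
The plan is to reduce, via Proposition \ref{lemma1}, to the case where $Y=\P^d$ and $\phi$ is surjective, and then to exploit the fact that $\phi_*\alpha=0$ carries essentially one numerical condition. Write $n:=\dim(X)$, $c:=n-d$, and let $L:=\phi^*H$ be the pullback of the hyperplane class $H$ (a nef class). A Poincaré-duality computation on $\P^d$ shows that $\phi_*\alpha=0$ is equivalent to the single equation $\alpha\cdot L^{n-1}=0$; moreover $L^{n-1}=\phi^*(H^{n-1})$ is a movable curve class, and for a prime divisor $E$ one checks that $E\cdot L^{n-1}=0$ holds exactly when $E$ is contracted by $\phi$. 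When $c\ge 2$ we have $L^{n-1}=0$ and \emph{every} divisor is contracted, so the statement is vacuous; the content lies in the cases $c=1$ and $c=0$, which I would treat separately. Throughout, let $W\subset N^1(X)_\R$ denote the span of the classes of contracted prime divisors.

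For $c=1$ (so $\phi$ has one-dimensional general fibers) I would argue purely on the level of numerical classes. Take the Stein factorization $\phi=h\circ\psi$, with $\psi:X\to Z$ having connected fibers and $h:Z\to\P^d$ finite. The general fiber $F$ of $\psi$ is an irreducible movable curve, so $\alpha\cdot F\ge 0$ since $\alpha$ is pseudo-effective; writing the general $\phi$-fiber as a sum of such $F$'s, the hypothesis $\alpha\cdot L^{n-1}=0$ forces $\alpha\cdot F=0$, hence $\alpha|_F$ is numerically trivial. By the standard description of the Néron–Severi group of a fibration, a class numerically trivial on the general fiber of $\psi$ lies in $\psi^*N^1(Z)_\R$ plus the subspace spanned by the $\psi$-vertical divisor classes. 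Both summands lie in $W$: a $\psi$-vertical divisor is $\phi$-contracted, and for a divisor $\delta$ on $Z$ the divisor $\psi^*\delta$ is mapped by $\phi$ onto $h(\delta)$, of dimension $d-1<n-1$, hence is contracted as well. Thus $\alpha\in W$.

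For $c=0$ (so $\phi$ is generically finite) I would instead use positive currents, in the spirit of the proof of Theorem \ref{theo5.4}. Represent $\alpha$ by a positive closed $(1,1)$-current $T$. Then $\phi_*T$ is a positive closed $(1,1)$-current on $\P^d$ whose cohomology class is $\phi_*\alpha=0$, so $\phi_*T=0$ by Lemma \ref{leposcurrent}. Over the dense open set $V\subset\P^d$ where $\phi$ is finite and étale one has $\phi^*\phi_*T\ge T$, so $\phi_*T=0$ gives $T|_{\phi^{-1}(V)}=0$, and therefore $T$ is supported on the proper closed set $\phi^{-1}(\P^d\setminus V)$. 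A closed positive $(1,1)$-current supported on an analytic set of codimension at least one can have no diffuse part, so Siu's decomposition yields $T=\sum_j\lambda_j[F_j]$ with $\lambda_j>0$ and $F_j$ prime divisors; applying $\phi_*$ and using that $[\phi(F_j)]>0$ in $H^2(\P^d,\R)$ whenever $F_j$ is not contracted forces $\lambda_j=0$ for each such $F_j$. Hence $\alpha=[T]$ is a combination of contracted divisors, i.e.\ $\alpha\in W$.

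The main obstacle is the $c=0$ step, where the argument is genuinely analytic: one must set up the Gysin pushforward on currents and justify $\phi_*T=0$ from positivity together with Lemma \ref{leposcurrent}, verify the faithfulness of étale pushforward for positive currents, and—most delicately—invoke the support and structure theorems guaranteeing that a closed positive $(1,1)$-current supported on a divisor is an effective combination of its codimension-one components with no residual mass in higher codimension. The $c=1$ step is comparatively formal, its only nontrivial external input being the standard computation of the Néron–Severi group of a fibration.
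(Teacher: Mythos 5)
Your case $c=1$ contains a genuine gap: the ``standard description of the N\'eron--Severi group of a fibration'' that you invoke is false. It is not true that a class $\alpha\in N^1(X)_\R$ which is numerically trivial on the general fiber $F$ of $\psi:X\to Z$ must lie in $\psi^*N^1(Z)_\R$ plus the span of the $\psi$-vertical divisor classes; the discrepancy is exactly the Leray piece $H^1(Z,R^1\psi_*\R)$, i.e.\ Mordell--Weil type classes. Concretely, let $\psi:S\to\P^1$ be the rational elliptic surface obtained by blowing up the nine base points of a general pencil of cubics, with exceptional curves $E_1,\dots,E_9$ (sections of $\psi$) and all fibers irreducible. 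Then $[E_1]-[E_2]$ has degree zero on every fiber, yet $\psi^*N^1(\P^1)_\R$ together with all vertical classes spans only $\R[F]$, which does not contain $[E_1]-[E_2]$ (by Shioda--Tate, $N^1(S)_\R$ has dimension $10$ while this ``trivial'' part is one-dimensional). Note that your argument uses pseudo-effectivity only to derive the numerical condition $\alpha\cdot F=0$; everything after that point would apply verbatim to $[E_1]-[E_2]$, so the step cannot be repaired without re-introducing positivity. This missing step is precisely the hard part of the theorem, and it is what the paper supplies: after passing to the open set $Y'\subset Y$ over which the fibers are smooth, one must show that the image of $\alpha\vert_{X'}$ in the Leray quotient $E_2^{1,1}=H^1(Y',R^1\phi'_*\R)$ vanishes (Lemma \ref{leleray}); this again uses pseudo-effectivity, by restricting $\alpha$ to the surface $X_C$ over a general complete-intersection curve $C\subset Y$, applying the curve case (Theorem \ref{curvecase}) there, and invoking the Hamm--L\^e theorem (Theorem \ref{hammutile}) for the injectivity of $H^1(Y',R^1\phi'_*\R)\to H^1(C',R^1\phi'_{C*}\R)$. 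Only once this piece is killed does $\alpha\vert_{X'}$ lie in ${\phi'}^*H^2(Y',\R)$, after which a descent along the lines you intend (Lemma \ref{leproof}, using Lemma \ref{l1}) concludes.

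Your case $c=0$, by contrast, is correct and genuinely different from the paper's. The paper restricts $\alpha$ to a general complete-intersection surface, applies the curve case there, and uses Hamm--L\^e connectivity plus Lemma \ref{l1} to descend the vanishing to $X$; you instead represent $\alpha$ by a closed positive $(1,1)$-current $T$, deduce $\phi_*T=0$ from Lemma \ref{leposcurrent}, conclude that $T$ vanishes over the finite \'etale locus by faithfulness of pushforward there, and then apply the support theorem for closed positive $(1,1)$-currents carried by the proper algebraic set $\phi^{-1}(\P^d\setminus V)$ to write $T=\sum_j\lambda_j[F_j]$ with $\lambda_j\geq 0$, after which pushing forward kills the non-contracted components. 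These analytic ingredients are all standard (and available in Demailly's book, which the paper cites), and this route has the added benefit of expressing $\alpha$ as a nonnegative combination of contracted divisor classes, i.e.\ it proves the strong (cone) form directly in this case, which the paper obtains only in Theorem \ref{divcone} via Lazarsfeld's result on exceptional divisors. But since $c=1$ is where the real difficulty of Theorem \ref{theodiv} lies, the proposal as a whole does not prove the theorem.
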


\begin{proof} Using Proposition \ref{lemma1}, we can assume that $\phi$ is surjective hence,
$c:=\dim (X)-\dim (Y)\geq 0$.
If $c\geq 2$,   any divisor on $X$ is contracted by $\phi$, so the theorem holds in this case.
There are thus two cases to consider, namely $c=0$ and $c=1$. We treat them separately.

\medskip
\noindent{\bf Case $c=0$.}  We have  $\dim (X)=\dim (Y)=n$. Let $Z\subset X$ be the (finite) union of the divisors
of $X$ contracted by $\phi$ and let $Z'\subset Y$ be the image of $Z$ in $Y$. By construction,
$\dim (Z')\leq n-2$. Set $Y':=Y\setminus Z$ and $X':=X\setminus \phi^{-1}(Z')$.
Since $Y$ is smooth, the map
$$\phi':=\phi\vert_{  X'}: X'\rightarrow Y' $$
is proper with finite fibers.

We now use the following Lefschetz-type result proved in \cite[Thm 2.1.4 and Remark 1.1.2, (iii)]{hammle}  and conjectured by Deligne.

\begin{theo}[Hamm-L\^e] \label{hammutile} Let $X_0$ be an $n$-dimensional smooth complex quasi-projective variety, let $\phi_0: X_0\rightarrow \P^N$ be
a morphism with finite fibers, and let $L\subset \P^N$ be a linear subspace of codimension $c$, in general position. Then the pair $(X_0,\phi_0^{-1}(L))$ is $(n-c)$-connected.
\end{theo}

We apply this statement to the composition of $\phi':X'\rightarrow Y'$ and an  embedding
of $Y$ into some $\P^N$. We   conclude that for a general
complete intersection surface $S_Y\subset Y$,
with inverse image
$S_X\subset X$, the pair $(X',S_X')$ is $2$-connected, where $S_X':=X'\cap S_X$.
It follows that the restriction map
$$H^2(X',\Q)\rightarrow H^2(S_X',\Q)$$
is injective.

Note that by Bertini's theorem, both surfaces $S_X$ and $S_Y$ are smooth.
Denoting by $\phi_S:S_X\rightarrow S_Y$ the restriction of $\phi$ to
$S_X$, we have, for any class $\alpha\in H^2(X,\R )$ such that
$\phi_* \alpha =0$ in $H^2(Y,\R )$,
\begin{equation}\label{for1}
(\phi_S)_*(\alpha\vert_{  S_X})=0\quad {\rm in}\ \  H^2(S_Y,\R ).
\end{equation}
Indeed, using the smoothness of $X$, $Y$, $S_X$, and $S_Y$, we easily show that
$$(\phi_*\alpha)\vert_{  S_Y}=(\phi_S)_*(\alpha\vert_{  S_X})\quad {\rm in}\ \ H^2(S_Y,\R ),$$
so (\ref{for1}) follows from the vanishing of $\phi_*\alpha$.

Assume furthermore that $\alpha$ is a pseudo-effective class on $X$. Then $\alpha\vert_{  S_X}$ is
a pseudo-effective class on  $S_X$ since $S_X$ is moving in $X$, and we just proved that it is annihilated by $\phi_{S*}$.
By Theorem \ref{curvecase}, we conclude that
the class $\alpha \vert_{  S_X}$ belongs to the real vector space spanned by classes of curves in $S_X$ contracted by
$\phi$, hence that the class
$\alpha \vert_{  S_X'}$ is equal to $0$, since $\phi_S$ is finite on $S'_X$. We thus proved that the class
$\alpha \vert_{  X'}\in H^2(X',\R )$ vanishes  in $H^2(S_X',\R )$.

Since the restriction map $H^2(X',\R )\rightarrow H^2(S_X',\R )$ is injective,
we conclude that $\alpha\vert_{  X'}=0$. We finally use the
following easy fact
(\cite[11.1.2]{voisinbook}).

\begin{lemm}\label{l1}
The kernel of the restriction
map
$H^2(X,\R )\rightarrow H^2(X',\R )$ is spanned by classes of divisors contained
in $X\setminus X'$.
\end{lemm}

By construction, these divisors are also  the divisors contracted by $\phi$ hence, this proves the theorem in the case $c=0$.

\medskip
\noindent{\bf Case $c=1$.} We now have $\dim (Y)=\dim (X)-1$ and the
general fibers $X_y:=\phi^{-1}(y)$ of
$\phi:X\rightarrow Y$ are smooth curves.
We first  observe the following.

\begin{lemm} Let  $\psi: \widetilde{X}\rightarrow Z$ be a smooth projective model
of the Stein factorization of $\phi$, with induced  birational morphism $\tau:\widetilde{X}\rightarrow X$. A   class $\alpha\in H^2(X,\R )$ is annihilated by
$\phi_*:H^2(X,\R )\rightarrow H^0(Y,\R )$ if and only if
the class $\tau^*\alpha$ is annihilated by $\psi_*:H^2(\widetilde{X},\R )\rightarrow H^0(Z,\R )$.
\end{lemm}

\begin{proof} Take a general point $y\in Y$. The restriction map
$H^0(Y,\R )\rightarrow H^0(\{y\},\R )$ is an isomorphism, and the composition
$$H^2(X,\R )\stackrel{\phi_*}{\rightarrow} H^0(Y,\R )\rightarrow H^0(\{y\},\R )$$
associates with $\alpha$ the sum $d$ of the degrees $d_i$ of its restrictions to the components
$X_{y,i}$, for $i\in\{1,\dots,m\}$, of the smooth curve $X_y:=\phi^{-1}(y)$.

Similarly, the composition
$$H^2(\widetilde  X,\R )\stackrel{\psi_*}{\rightarrow} H^0(Z,\R )\rightarrow H^0(\{z_i\},\R ),$$
where $z_i\in Z$ maps to $y$, associates with $\tau^*\alpha$ the degree $\widetilde d$ of its restriction to the smooth irreducible curve $\widetilde X_{z_i}$. We have $d=\sum_{i=1}^md_i=m\widetilde d$.

It follows that $d$ vanishes if and only if $\widetilde d$ does. This proves the lemma.
 \end{proof}

Note also that $\tau^*\alpha$ belongs to the real vector space
 spanned by classes of divisors contracted by $\psi$ if and only if
 $\alpha$ belongs to the real vector space
 spanned by classes of divisors contracted by $\phi$. Moreover, if $\alpha$ is pseudo-effective, so is $\tau^*\alpha$.
 Combining these arguments, we are reduced to the case where $\phi:X\rightarrow Y$ has
 connected fibers (and $X$ and $Y$ are still smooth).

Let $Y'\subset Y$ be the dense Zariski open subset  defined by the condition that $y\in Y'$ if and only if the fiber $X_y:=\phi^{-1}(y)$ is a smooth curve.
Set $X':=\phi^{-1}(Y')\subset X$.

Let $\alpha$ be a  pseudo-effective divisor class on $X$ which is annihilated by $\phi_*$.
Its restriction to $X'$ belongs to the kernel
of the map
$$H^2(X',\R )\rightarrow H^0(Y',R^2\phi'_*\R ),$$
which is the first quotient map
\begin{equation}\label{q}
H^2(X',\R )\rightarrow E_2^{0,2}
\end{equation}
of the Leray spectral sequence of $\phi'$. Recalling from
\cite{delignedeg} or \cite[II, 4.2.3]{voisinbook} that the Leray
spectral sequence of $\phi'$ degenerates at $E_2$, we find that the
kernel $L^1H^2(X',\R )$ of the map in (\ref{q})
 maps surjectively to
$E_2^{1,1}=H^1(Y',R^1\phi'_*\R )$.
The kernel of this last map is   $E_2^{2,0}=H^2(Y',R^0\phi'_*\R )$.

\begin{lemm} \label{leleray} Let $\alpha$ be as above. Then the restricted class $\alpha\vert_{  X'}$ maps to $0$ in
the Leray quotient
$H^1(Y',R^1\phi'_*\R )$.
\end{lemm}

\begin{proof}
Let  $C\subset Y$ be a curve which is a complete intersection of ample hypersurfaces in general position. Set $C':=C\cap Y'$, $X_C:=\phi^{-1}(C)$, and $X'_C:=X_C\cap X'$, and let $\phi'_C:X'_C\rightarrow C'$ be  the restriction of $\phi$.
According to Theorem \ref{hammutile}, the pair
$(Y',C')$ is $1$-connected hence, the restriction map
$$H^1(Y',R^1\phi'_*\R )\rightarrow H^1(C',R^1\phi'_{C*}\R )$$
is injective. It thus suffices to show that the image of $\alpha$ in
  $H^1(C',R^1\phi'_{C*}\R ) $ vanishes, which we do now.

The space $H^1(C',R^1\phi'_{C*}\R ) $ is the second Leray quotient of
the group $H^2(X'_C,\R )$. The image of $\alpha$ in this group can be obtained as well
by restricting first $\alpha$ to the smooth surface $X_C$, and observing that
the class $\alpha_C$ so obtained has the property that its restriction
to $X'_C$ belongs to the Leray level
$L^1H^2(X'_C,\R )$.
On the other hand, the class
$\alpha_C$ is pseudo-effective on $X_C$ and it is annihilated by the
map $\phi_{C*}:H^2(X_C,\R )\rightarrow H^0(C,\R )$. Hence we can apply
Theorem \ref{curvecase} to $\alpha_C$ and conclude that it belongs to the space spanned by
classes of components of fibers of $\phi_C:X_C\rightarrow C$.
It follows that the restriction
$\alpha_C\vert_{ X'_C}$ vanishes in the Leray quotient $H^1(C',R^1\phi'_{C*}\R )$.
\end{proof}

Lemma \ref{leleray} tells us  that
$\alpha\vert_{  X'}$ belongs to the
subspace
$$E_2^{2,0}=H^2(Y',R^0\phi'_*\R )\subset  H^2(X',\R ).$$
As we assumed that $\phi$ has connected fibers, $R^0\phi'_*\R =\R $
and $H^2(Y',R^0\phi'_*\R )\subset  H^2(X',\R )$ is nothing but
${\phi'}^*H^2(Y',\R )\subset H^2(X',\R )$.

We   have the following result.

\begin{lemm} \label{leproof} Let $\alpha\in N^1(X)_\R $.
Assume that its image $\beta\in H^2(X',\R )$ under the restriction map
  belongs
to ${\phi'}^*H^2(Y',\R )$.
Then $\beta$ belongs to
the image of the  composed map
$$N^1(Y)_\R \stackrel{{\phi}^*}{\rightarrow}H^2({X},\R )\rightarrow H^2(X',\R ).$$
More precisely, we have
\begin{equation}\label{decomp}\alpha={\phi}^*\beta'+\alpha',
\end{equation}
where $\beta'\in N^1(Y)_\R$ and $\alpha'$ is an $\R $-combination of classes
of divisors supported on $X\setminus X'$.
\end{lemm}

\begin{proof} We have $\beta=\alpha\vert_{  X'}$, with
$\alpha\in NS(X)_\R $ and $\beta={\phi'}^*\beta''$ for some
$\beta''\in H^2(Y',\R )$.
Let  $l $ be an ample class  on
$ X $, with degree $e >0$ on the general fibers of ${\phi}$. We have
by the projection formula
$$e\beta''=\phi'_*(l\cup {\phi'}^*\beta'')=\phi'_*(l\cup \beta),$$
which can be rewritten as
$$e\beta''=\phi_*(l\cup \alpha)\vert_{  Y'}.$$
The class $\beta':=\frac1e\phi_*(l\cup \alpha)$ belongs to
$N^1(Y)_\R $.
 We thus have  $$\beta={\phi'}^*\beta''=\phi^*\beta'\vert_{  X'} $$ for
some $\beta'\in NS( Y)_\R  $. Recalling that
$\beta=\alpha\vert_{  X'}$, we get
$$({\phi}^*\beta'-\alpha)\vert_{  X'}=0\,\,{\rm in}\,\, H^2(X',\R ).$$
By lemma \ref{l1}, the difference
$\alpha':={\phi}^*\beta'-\alpha$ belongs to the real vector subspace
of $N^1(X)_\R $ spanned by classes
of divisors supported on ${X}\setminus X'$.

Hence we proved that
$$\alpha={\phi}^*\beta'-\alpha',$$
for some $\beta'\in N^1(Y)_\R $ and some $\alpha'$ in
 the  real vector subspace
of $N^1(X)_\R $ spanned by classes
of divisors supported on ${X}\setminus X'$.
\end{proof}

This  lemma immediately implies the theorem when $c=1$. Indeed,  let
$\alpha$ be a degree-$2$ pseudo-effective class which is annihilated
by $\phi_*$. Using Lemmas \ref{leleray} and  \ref{leproof}, we
  decompose $\alpha$ as in (\ref{decomp}). Clearly,
  $\phi^*(N^1(Y)_\R)  $ is spanned by classes of
divisors contracted by $\phi$, and similarly,   the divisorial part
of the complement ${X}\setminus X'$, being sent by $\phi$ on the
proper closed algebraic subset $Y\setminus Y'\subset Y$, consists of
subvarieties contracted by $\phi$.
 \end{proof}

\subsection{The Strong   Conjecture \ref{conj2} for divisors}

 We now show that  for divisors, the strong form of Conjecture \ref{conj2}   follows  from its weak form.

\begin{theo}\label{divcone} The Strong Conjecture \ref{conj2}  holds true for divisors ($l=1$).
\end{theo}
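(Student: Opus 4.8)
The plan is to deduce the strong (convex cone) statement from the weak (vector space) statement already proved in Theorem \ref{theodiv}, using a separation argument in the finite-dimensional space $N^1(X)_\R$. By Proposition \ref{lemma1}, I may assume $\phi$ is surjective with $c=\dim(X)-\dim(Y)\geq 0$, and as in the proof of Theorem \ref{theodiv}, the case $c\geq 2$ is trivial since every divisor is contracted. So I focus on $c\in\{0,1\}$. Let $\cK\subset N^1(X)_\R$ denote the closed convex cone spanned by classes of $\phi$-contracted divisors, and let $\alpha$ be a pseudo-effective class with $\phi_*\alpha=0$; I want $\alpha\in\cK$.

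First I would argue by contradiction: if $\alpha\notin\cK$, then since $\cK$ is a closed convex cone in the finite-dimensional space $N^1(X)_\R$ and $\alpha$ is a point outside it, the Hahn–Banach (hyperplane) separation theorem furnishes a linear functional, i.e.\ a class $\gamma\in N^1(X)_\R^\vee=N_1(X)_\R$, with $\gamma\cdot\alpha<0$ while $\gamma\cdot\delta\geq 0$ for all $\delta\in\cK$. The strategy is to convert this separating functional into a genuine curve class and derive a contradiction with pseudo-effectivity. The key input is Theorem \ref{theodiv}: it tells me that $\alpha$ lies in the \emph{vector space} $V$ spanned by contracted-divisor classes. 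Thus $\cK$ spans $V$, and the separation should be performed relative to $V$ so that the difficulty is genuinely about the cone-versus-span gap and not about directions transverse to $V$.

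The main technical step, and the one I expect to be the principal obstacle, is to realize the separating functional by a \emph{movable} or \emph{nef-type} curve class that pairs nonnegatively against all pseudo-effective divisors. The natural candidate is an intersection-theoretic curve class of the form $\gamma=\phi^*H\cdot h^{n-2}$, where $H$ is ample on $Y$ and $h$ is ample on $X$, or more generally classes of complete-intersection curves moving in fibers; such classes are limits of classes of irreducible curves and hence pair nonnegatively with every pseudo-effective class. The delicate point is to choose the separating direction within the cone of such "mobile" curve classes: I must show that the dual cone of $\cK$ inside $V^\vee$ contains enough movable curve classes to separate $\alpha$, equivalently that the contracted-divisor cone $\cK$ is \emph{cut out} by the inequalities coming from pairing with contracted curves (those $\gamma$ for which $\gamma\cdot\delta=0$ identically on fiber divisors but $\gamma$ nef). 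Concretely, because $\phi_*\alpha=0$, pairing $\alpha$ with $\phi^*H\cdot(\cdots)$ vanishes, which pins down $\alpha$ to the fiber directions and lets me compare with the relative ample cone.

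Finally, having produced a pseudo-effective $\alpha$ with $\gamma\cdot\alpha<0$ for a limit of effective curve classes $\gamma$, I reach the contradiction directly, since a pseudo-effective divisor class pairs nonnegatively with any limit of effective curve classes. The remaining routine verifications are that $\gamma\cdot\delta\geq 0$ for the extremal generators $\delta$ of $\cK$ (the contracted prime divisors), which holds because a contracted divisor meets a general movable fiber curve nonnegatively, and that the separating $\gamma$ can indeed be taken in the closure of the movable cone. Assembling these, $\alpha\notin\cK$ is impossible, so $\alpha\in\cK$, proving the strong conjecture for divisors.
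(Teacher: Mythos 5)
Your reduction via Proposition \ref{lemma1} and the use of Theorem \ref{theodiv} to place $\alpha$ in the span $V$ of contracted divisor classes are fine, but the central step of your argument is missing, and it is the whole theorem. The Hahn--Banach step only produces \emph{some} functional $\gamma\in N_1(X)_\R$ with $\gamma\cdot\alpha<0$ and $\gamma\geq 0$ on $\cK$; everything then hinges on your ``delicate point,'' namely that $\gamma$ can be chosen to pair nonnegatively with every pseudo-effective class (equivalently, by Boucksom--Demailly--P\u{a}un--Peternell, to be a \emph{movable} curve class). You assert this but give no proof, and by cone duality this assertion is equivalent to the statement being proved, so the dualization gains nothing. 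Moreover, your fallback justification is false: a pseudo-effective divisor class need \emph{not} pair nonnegatively with a limit of classes of effective curves --- on a surface, a $(-1)$-curve $E$ is both effective and pseudo-effective, yet $E\cdot E=-1$. Contracted divisors typically meet effective curves inside themselves negatively; this negativity is exactly the crux of the problem. Finally, your concrete candidates cannot do the job: for $\phi\colon X\to\P^2$ the blowup of a point (so $n=2$, $c=0$, $\cK=\R_{\geq 0}[E]$), your class $\phi^*H\cdot h^{n-2}=\phi^*H$ satisfies $\phi^*H\cdot\beta=H\cdot\phi_*\beta=0$ for every $\beta\in\Ker\phi_*$, so it cannot distinguish $-[E]$ from $[E]$. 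The movable class that works there is $h-e$ (lines through the center, with $h$ the pullback of the line class and $e=[E]$), and producing such movable classes dual to prescribed contracted divisors in general is a genuine theorem, not a routine verification.

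For contrast, the paper supplies precisely the negativity input that your proposal postulates. After the same reduction and the Stein factorization $\phi=h\circ\psi$, in the case $c=0$ it uses Theorem \ref{theodiv} to write $\alpha=\sum_i a_i[E_i]$ with the $E_i$ $\psi$-exceptional, and then invokes the result of Lazarsfeld recorded in \cite{KL}: a pseudo-effective exceptional divisor $\sum_i a_iE_i$ has all $a_i\geq 0$. In the case $c=1$ it flattens $\psi$ (Hironaka/Gruson--Raynaud), splits the pulled-back divisor as a $\psi'$-degenerate part plus a pullback from the flattened base, and proves that the pullback part is pseudo-effective using Nakayama's $\sigma$-decomposition and the lemmas of Gongyo--Lehmann \cite{GL}. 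These Zariski-decomposition-type results carry the actual content; without an argument of comparable strength, your separation scheme does not close.
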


\begin{proof}
As in the proof of Theorem \ref{theodiv}, we may by Proposition \ref{lemma1}   assume that $\phi$ is surjective
and that $c:=\dim(X)-\dim(Y)$ is 0 or 1. We denote by $\phi: X\xrightarrow{\psi} Z\xrightarrow{h} Y$ the Stein factorization of $\phi$, where $X$ and $Y$ are smooth and $Z$ is normal.

\medskip
\noindent{\bf Case $c=0$.}
The morphism $\phi$ is generically finite. Let $\alpha\in  H^2(X,\R) $ be a pseudo-effective cohomology class annihilated by $\phi_*:H^2(X,\R)\rightarrow H^2(Y,\R)$. By Theorem \ref{theodiv}, we can write $\alpha=\sum_ia_i[E_i]$, where the $a_i$  are real numbers and the $E_i$
are effective divisors contracted by $\phi$. In the Stein factorization, $\psi$ is birational and the $E_i$  are $\psi$-exceptional divisors. By a result of Lazarsfeld (\cite[Corollary 13]{KL}), the $\psi$-exceptional divisor $\sum_i a_iE_i$ is pseudo-effective if and only if $a_i\geq 0$ for all $i$. Hence $\alpha$ belongs to the convex cone spanned by classes of effective divisors of $X$ which are contracted by $\phi$.

\medskip\noindent{\bf Case $c=1$.}
We use the same reduction as in the proof of \cite[Theorem (1.12)]{mor}.
By the flattening theorem of Hironaka or Gruson-Raynaud, we can find a desingularization $g: Z'\rightarrow Z$ such the induced morphism $(X\times_ZZ')^{\textrm{main}}\rightarrow Z'$ is flat. Let $X'$ be the normalization of $(X\times_ZZ')^{\textrm{main}}$. We have a commutative diagram of induced morphisms
$$\xymatrix{
X'\ar[r]^f\ar[d]^{\psi'}& X\ar[d]^{\psi}\ar[dr]^{\phi}\\
Z'\ar[r]^g & Z\ar[r]^h& Y.
}
$$
Let $\alpha\in  H^{2}(X,\R) $ be a pseudo-effective cohomology class annihilated by $\phi_*:H^2(X,\R)\to H^2(Y,\R)$.  
 By Theorem \ref{theodiv}, $\alpha$ belongs to the real vector space spanned by classes of hypersurfaces contracted by $\phi$, hence also by $\psi$. Since $X$ is smooth, $\alpha$ is in particular the class of an $\R$-Cartier $\R$-divisor on $X$, hence
   $f^*\alpha$ is the class of an $\R$-Cartier $\R$-divisor
$D:=\sum_ia_i D_i $, 
where $a_i\in\R$ and the $D_i$ are irreducible hypersurfaces on $X'$ contracted by $\psi'$ (\ie, $\psi'(D_i)\ne Z'$).

 More precisely, since $X'\rightarrow (X\times_ZZ')^{\rm main}$ is finite and $(X\times_ZZ')^{\rm main}\rightarrow Z'$ is flat, the $\psi'(D_i)$ are   hypersurfaces in $Z'$. For each irreducible component $E_j$ of $\psi'(D)$, we write $\psi'^{*} E_j =\sum_ib^j_iD_i$. Set
$$c_j:=\min_{ b^j_i\ne 0} \frac{a_i}{b^j_i} $$
and $E:=\sum_jc_jE_j$; since $Z'$ is smooth, this an $\R$-Cartier $\R$-divisor. The $\R$-Cartier $\R$-divisor $G:=D-\psi^{\prime *}E$ is then  contracted by $\psi'$. 
 Moreover, for any irreducible component $E_j$ of $\psi'(G)$, there exists an irreducible component  of $\psi'^{-1}(E_j)$ not contained in $ \Supp (G)$.

To conclude the proof of Theorem \ref{divcone}, we just need to prove the following.

\medskip\noindent{\bf Claim.} The class $[E]$  in $H^2(Z',\R)$ is pseudo-effective.

Indeed, if the claim holds,   there exists a sequence $(V_n)_n$ of effective divisors   of $Z'$ such that $ \lim_{n\to \infty}[V_n]=   [E]$ in $H^2(Z', \R)$. We have then
$$\alpha=f_*f^*\alpha=[f_*D]=[f_*G]+[f_*\psi^{\prime *}E]=[f_*G]+\lim_{n\to \infty}[f_*\psi^{\prime *}V_n]
$$
hence $\alpha$ is pseudo-effective.

\medskip\noindent{\bf Proof of the claim.}
 Let $\eps: \widetilde{X}\rightarrow X'$ be a desingularization and set $\varphi:=\psi'\circ \eps:\widetilde{X}\to Z'$. We have
 $\eps^*D=\eps^*G+\varphi^*E$ and $\eps^*G$ is an effective $\R$-Cartier $\R$-divisor contracted by $\varphi$. Moreover, for any prime divisor
 $P$ on $Z'$, there exists a prime divisor $\Gamma $ on $ \widetilde{X}$ such that $\varphi(\Gamma)=P$ and $\Gamma$ is not contained in $\Supp (\eps^*G)$.
 In other words, $\eps^*G$ is   a $\varphi$-degenerate divisor in the sense of \cite[Definition 2.14]{GL}.

 We now consider the $\sigma$-decomposition (see for instance  \cite[Definition 2.8 and Lemma 2.9]{GL}) $$\eps^*D=P_{\sigma}(\eps^*D)+N_{\sigma}(\eps^*D).$$
Recall that $P_{\sigma}(\eps^*D)$ is pseudo-effective and 
$$N_{\sigma}(\eps^*D):=\sum_{\Gamma}\sigma_\Gamma(D)\Gamma$$ is an effective $\R$-divisor, where, fixing some ample divisor $A$ on $\widetilde{X}$, we set
$$\sigma_\Gamma(D):=\min_{ t>0} \{\mult_{\Gamma}(D')\mid D'\sim_\Q D+t A \}.$$
 Let $H$ be a very ample divisor on $Z'$. Then by definition of $N_{\sigma}$, we have
 $$N_{\sigma}(\eps^*D+\varphi^*H)\leq N_{\sigma}(\eps^*D).$$
 On the other hand, $\eps^*D+\varphi^*H=\varphi^*(E+H)+\eps^*G$. 
  We may assume that $E+H $ is effective. Then by \cite[Lemma 2.16]{GL}, we have
 $$\eps^*G\leq N_{\sigma}(\eps^*D+\varphi^*H).$$
 Thus $\eps^*G\leq N_{\sigma}(\eps^*D)$ and hence 
 $$\varphi^*E=\eps^*D-\eps^*G=P_{\sigma}(\eps^*D)+(N_{\sigma}(\eps^*D)-\eps^*G)$$
  is pseudo-effective. Thus $E$ is pseudo-effective.
\end{proof}

\section{Weakly pseudo-effective cycles} \label{secpsefan}

We turn now to Conjecture \ref{conj5}, a variant of Conjecture  \ref{conj1}  involving weakly pseudo-effective classes. This variant is quite natural, as we already used
currents in the arguments in   previous sections and they seem to be
the right tool to attack the problem. We will show however  that it would have strong consequences on
the generalized Hodge conjecture.

Let $X$ be a {\em smooth projective variety such that $H^{i,0}(X)=0$  for $i>0$.} Equivalently, the Hodge coniveau
of $H^i(X,\Q)$ is $\geq 1$ for any $i>0$.
The following conjecture  is  a particular case of the generalized Hodge conjecture of  Grothendieck (\cite{gro}).

 \begin{conj}[Grothendieck] \label{conjGHC} If $X$ is as above, there exists a proper closed algebraic subset
 $Y\subset X$ such that the restriction map $H^i(X,\Q)\rightarrow H^i(X\setminus Y,\Q)$
 is zero for any $i>0$.
 \end{conj}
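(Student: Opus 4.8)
The plan is to deduce Conjecture \ref{conjGHC} from Conjecture \ref{conj5} by re-encoding the generalized Hodge conjecture as a support statement for the class of the diagonal, and then feeding a suitable weakly pseudo-effective class into Conjecture \ref{conj5}. Throughout set $n:=\dim (X)$ and work on the smooth variety $X\times X$ with second projection $p_2\colon X\times X\to X$. Write $[\Delta]\in H^{2n}(X\times X,\Q)$ for the class of the diagonal, fix a point $x_0\in X$, and consider
$$\beta:=[\Delta]-[\{x_0\}\times X]\in H^{2n}(X\times X,\Q).$$
In the K\"unneth decomposition $[\Delta]=\sum_{i=0}^{2n}\delta_i$ with $\delta_i\in H^i(X)\ot H^{2n-i}(X)$, the class $[\{x_0\}\times X]$ is exactly the top component $\delta_{2n}\in H^{2n}(X)\ot H^0(X)$, so $\beta=\sum_{i<2n}\delta_i$. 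Two facts are then immediate. First, viewed as a correspondence, $\beta$ acts as the identity on $H^j(X)$ for every $j>0$: the removed component acts by $a\mapsto p_{2*}(p_1^*a\cup p_1^*[x_0])=p_{2*}p_1^*(a\cup[x_0])$, which vanishes for $j>0$ since $a\cup[x_0]\in H^{j+2n}(X)=0$. Second, $p_{2*}\beta=0$ in $H^0(X,\Q)$, because both $\Delta$ and $\{x_0\}\times X$ map isomorphically to $X$, so $p_{2*}[\Delta]=p_{2*}[\{x_0\}\times X]=1$.

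Granting that $\beta$ is weakly pseudo-effective, I would apply Conjecture \ref{conj5} to $\beta$ and $p_2$ (here $l=n$ and $c=n$, so $p_{2*}\colon H^{2n}(X\times X,\R)\to H^0(X,\R)$). Its conclusion is that $\beta=\sum_k a_k[Z_k]$ for finitely many $n$-dimensional subvarieties $Z_k\subset X\times X$ contracted by $p_2$. For each such $Z_k$ one has $\dim (p_2(Z_k))<\dim (Z_k)=n$, whence $Z_k\subset X\times\overline{p_2(Z_k)}\subset X\times D$, where $D:=\bigcup_k\overline{p_2(Z_k)}$ is a \emph{single} proper closed subset of $X$, which we may enlarge to a divisor. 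Thus $\beta$ is the class of a cycle supported on $X\times D$. Factoring this support through $\mathrm{id}\times j$ with $j\colon\widetilde D\to X$ a desingularization and using the projection formula, the correspondence $\beta_*$ factors through the Gysin map $j_*\colon H^{j-2}(\widetilde D,\Q)\to H^j(X,\Q)$. Since $\beta_*=\mathrm{id}$ on $H^j(X)$ for $j>0$, every such class lies in $\Im(j_*)$, i.e. in the kernel of $H^j(X,\Q)\to H^j(X\setminus D,\Q)$. This is precisely Conjecture \ref{conjGHC}, in the strong form in which $Y$ may be taken to be a divisor. Note that no mixed Hodge theory or extra desingularization is needed to run this formalism, since $X\times X$ and $X$ are already smooth.

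The main obstacle is the input hypothesis of Conjecture \ref{conj5}, namely that $\beta$ is \emph{weakly} pseudo-effective, and this is exactly where the assumption $H^{i,0}(X)=0$ for $i>0$ must be used. It is essential that one works with currents rather than with algebraic effectivity: an effective cycle $E$ with $p_{2*}[E]=0$ necessarily satisfies $p_2(E)\subsetneq X$, so it already lives over a divisor, and producing such an $E$ representing $\beta$ would be tautologically equivalent to the conclusion we are seeking. The model case is $X=\P^n$, where $[\Delta]=\sum_{i=0}^{n}p_1^*\omega^i\wedge p_2^*\omega^{n-i}$ for $\omega=c_1(\cO_{\P^n}(1))$, so that $\beta=\sum_{i=0}^{n-1}p_1^*\omega^i\wedge p_2^*\omega^{n-i}$ is visibly a positive combination of products of nef classes, hence weakly pseudo-effective. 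For general $X$ this decomposition fails at the level of forms because of the transcendental middle cohomology; however, the surviving K\"unneth components $\delta_i$ ($0<i<2n$) of $\beta$ lie, by the vanishing $H^{i,0}(X)=0$, entirely in $\bigoplus_{1\le p,q\le n-1}H^{p,q}(X)\ot H^{n-p,n-q}(X)$, i.e. in Hodge coniveau $\ge 1$ in each factor, with only the effective corner term $\delta_0=[X\times x_0]$ touching the boundary.

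I would therefore prove weak pseudo-effectivity of $\beta$ by constructing an explicit positive current in its class out of this coniveau-$1$ structure, for instance by approximating the diagonal current $[\Delta]$ by positive currents (a Bergman-kernel or Demailly-regularization construction) while subtracting off the top K\"unneth piece, the hypothesis $H^{i,0}(X)=0$ being exactly what guarantees that no negative boundary contribution survives in the limit. Establishing this positivity is the step I expect to be genuinely difficult; once it is in hand, the passage from Conjecture \ref{conj5} to Conjecture \ref{conjGHC} is formal, as indicated above.
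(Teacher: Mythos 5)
Your second paragraph (the formal deduction from Conjecture~\ref{conj5} to Conjecture~\ref{conjGHC}) is essentially the same as the paper's proof of Theorem~\ref{th65}: apply Conjecture~\ref{conj5} to a class in $H^{2n}(X\times X,\R)$ killed by $\pr_{2*}$, collect the contracted $n$-dimensional subvarieties over a single proper closed subset (enlarged to a divisor), and conclude that the diagonal class is supported on $X\times Y$. The genuine gap is the positivity input that you yourself flag as the hard step, and your proposed route to it is not the paper's and, as sketched, does not work. The paper never claims that $\beta=[\Delta_X-x\times X]$ is weakly pseudo-effective. What it proves (Proposition~\ref{lewepsef}) is that the \emph{corrected} class $A:=\lambda(\pr_1^*\omega+\pr_2^*\omega)^{n-1}\cup\pr_2^*\omega+[\Delta_X-x\times X]$ is weakly pseudo-effective for $\lambda\gg0$: the hypothesis $H^{i,0}(X)=0$ (with Hodge symmetry) forces every K\"unneth summand of $\beta$ to be of the form $\pr_1^*\tilde{\beta}^{n-p,n-q}\wedge\pr_2^*\tilde{\alpha}^{p,q}$ with $p\geq 1$ and $q\geq 1$, so that locally each term contains a factor $\pr_2^*(dz_i\wedge d\overline{z}_j)$ and is pointwise dominated by $\lambda(\pr_1^*\omega+\pr_2^*\omega)^{n-1}\wedge\pr_2^*\omega$; hence $A$ is represented by a nonnegative form. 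The correction term is engineered so that it is still killed by $\pr_{2*}$ (every monomial in it carries at least one $\pr_2^*\omega$), so Conjecture~\ref{conj5} applies to $A$, and so that, once $Y$ is chosen to be a divisor whose class is a multiple of $\omega$, the correction is itself the class of a $\Q$-cycle supported on $X\times Y$ and can be subtracted off at the end.

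Your plan instead requires weak pseudo-effectivity of $\beta$ itself, a strictly stronger statement than Proposition~\ref{lewepsef} (adding the nonnegative correction form to a positive current preserves positivity, so your claim implies theirs), and your sketch for it is not a proof: $[\Delta_X]$ and $[x\times X]$ are each positive integration currents, but nothing makes their \emph{difference} cohomologous to a positive current, and Demailly-type regularization of $[\Delta_X]$ gives no control whatsoever on the sign of the difference. Moreover you misidentify where $H^{i,0}(X)=0$ enters: it is not a mechanism that kills ``negative boundary contributions in a limit,'' but the pointwise Hodge-type statement above, and what it buys is exactly a bound of the K\"unneth representative of $\beta$ by $\lambda(\pr_1^*\omega+\pr_2^*\omega)^{n-1}\wedge\pr_2^*\omega$ --- i.e.\ precisely the corrected statement, not positivity of $\beta$. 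That the authors raise as an open question (the remark following Proposition~\ref{lewepsef}) whether even $A$ is pseudo-effective is a further sign that positivity of $\beta$ itself is out of reach. So the missing idea is the $\pr_{2*}$-trivial correction term; with it, your formal argument goes through verbatim, and without it your proof has no way to start. (A minor point in your formal part: Conjecture~\ref{conj5} gives real coefficients, and the paper passes to $\Q$-coefficients using rationality of $A$; for the restriction statement of Conjecture~\ref{conjGHC} real coefficients suffice, so this is harmless.)
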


If $n=\dim (X)$, by the K\"{u}nneth decomposition, the conclusion is equivalent to the fact that the cycle class
 $$[\Delta_X- x\times X]\in H^{2n}(X\times X,\Q) $$
  vanishes in $H^{2n}(X\times (X\setminus Y),\Q)$  or, equivalently, comes from
 a   class in $H_{2n}(X\times Y,\Q)$.

 A stronger version of Conjecture \ref{conjGHC} asserts that there exist a proper
 closed algebraic subset $Y\subset X$ and  an $n$-cycle
 $Z\in \cZ_n(X\times X)_\Q$ supported on $X\times Y$ such that
 the class of $Z$ in $H^{2n}(X\times X,\Q)$ is equal to $[\Delta_X- x\times X]$. We prove below (Theorem \ref{th65}) that {\em this strong form of Conjecture \ref{conjGHC} is a consequence of Conjecture \ref{conj5}} made in the introduction.

 We begin with the following.

 \begin{prop} \label{lewepsef} Let $X$ be as above (that is $H^{i,0}(X)=0$ for $i>0$) and let $\omega \in H^2(X,\Z)$ be
 the class of an ample line bundle on $X$.
 Then for $\lambda\gg 0$, the class
 $$\lambda(\pr_1^*\omega+\pr_2^*\omega)^{n-1}\cup \pr_2^*\omega+[\Delta_X- x\times X]$$
 is weakly pseudo-effective, where $\pr_1$ and $\pr_2$ are the two projections from
 $X\times X$ to $X$.
 \end{prop}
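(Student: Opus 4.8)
The plan is to reduce weak pseudo-effectivity to a pairing inequality and then to control the ``transcendental'' part of the diagonal by powers of the Kähler class via the Hodge--Riemann bilinear relations; the hypothesis $H^{i,0}(X)=0$ is exactly what makes this control possible. Set $H:=\pr_1^*\omega+\pr_2^*\omega$, a Kähler class on $X\times X$, put $\Omega:=H^{n-1}\cup\pr_2^*\omega$ and $\gamma:=[\Delta_X-x\times X]\in H^{n,n}(X\times X,\R)$. Invoking the duality between positive closed currents and closed strongly positive forms (see \cite[Chapter III]{demailly}), to prove that $\gamma+\lambda\Omega$ is weakly pseudo-effective it suffices to show that, for the Poincar\'e pairing $\langle\,\cdot\,,\,\cdot\,\rangle$ on $H^{n,n}(X\times X,\R)$, one has $\langle\gamma+\lambda\Omega,[u]\rangle\ge 0$ for every class $[u]$ represented by a smooth closed strongly positive $(n,n)$-form $u$. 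Since $\Omega$ and $u$ are strongly positive, $\langle\Omega,[u]\rangle\ge 0$, so the whole point is to bound $\langle\gamma,[u]\rangle$ from below by a fixed multiple of $\langle\Omega,[u]\rangle$, uniformly in $u$.

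I would then decompose $\gamma$ by the K\"unneth formula. Writing $\pi_i\in H^i(X)\otimes H^{2n-i}(X)$ for the K\"unneth components of $[\Delta_X]$, one has $\pi_{2n}=[x\times X]$, so $\gamma=\sum_{i=0}^{2n-1}\pi_i$; moreover $\pi_0=[X\times x]$ is effective, and since $u$ restricts to a non-negative top form on the subvariety $X\times\{x\}$ we get $\langle\pi_0,[u]\rangle\ge 0$. For $0<i<2n$, Poincar\'e duality identifies $\pi_i$ with the identity of $H^i(X)$, and $\langle\pi_i,[u]\rangle$ is, up to a sign prescribed by the Weil operator, the trace on $H^i(X)$ of the endomorphism induced by the matching K\"unneth component of $[u]$, self-adjoint for the Hodge--Riemann form. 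Here the hypothesis enters decisively: because $H^{j,0}(X)=0$ for $j>0$ (so that in particular $H^1(X)=0$), every nonzero Hodge summand $H^{p,q}(X)\otimes H^{p',q'}(X)$ of $\pi_i$ has $p,q,p',q'\ge 1$. The extreme corners $H^{i,0}\otimes H^{0,i}$ and $H^{0,i}\otimes H^{i,0}$, which carry the part of the diagonal that cannot be dominated by a degenerate Kähler form (as already happens on an abelian variety), are absent.

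The heart of the matter is then the uniform estimate
\[
\bigl|\langle\pi_i,[u]\rangle\bigr|\ \le\ C\,\langle\Omega,[u]\rangle\qquad(0<i<2n),
\]
with $C=C(X,\omega)$ independent of $u$. I would prove it by expanding the harmonic representative of $\pi_i$ in a basis of $H^i(X)$ adapted to the Lefschetz decomposition and normalized by the Hodge--Riemann forms, thereby writing $\langle\pi_i,[u]\rangle$ as a combination of pairings of $u$ against external products $\pr_1^*\alpha\wedge\pr_2^*\beta$ of harmonic forms whose bidegrees avoid the $(i,0)$/$(0,i)$ corners. Such products are of mixed type, so they pair with the corresponding ``off-diagonal'' components of $u$; the pointwise Cauchy--Schwarz inequality satisfied by the strongly positive form $u$ (positive Hermitian matrices have $|M_{jk}|\le\sqrt{M_{jj}M_{kk}}$) then bounds each of these by the geometric mean of ``diagonal'' pairings, which are precisely mixed Kähler monomials $\langle\pr_1^*\omega^a\cup\pr_2^*\omega^{n-a},[u]\rangle$. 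The bidegree bookkeeping, made possible by $p,q\ge 1$, guarantees that only monomials with $n-a\ge 1$, i.e.\ carrying at least one factor $\pr_2^*\omega$, ever occur; each of these is $\le\langle\Omega,[u]\rangle$ up to a binomial constant. This is exactly where the special shape $\Omega=H^{n-1}\cup\pr_2^*\omega$ is used: $\Omega$ controls every mixed monomial \emph{except} the pure $\pr_1^*\omega^{n}$, which is proportional to $[x\times X]$ --- the very class we subtracted --- and which is never needed once the $(i,0)$/$(0,i)$ corners vanish. Summing over $0<i<2n$ and adding $\langle\pi_0,[u]\rangle\ge 0$ gives $\langle\gamma,[u]\rangle\ge -C'\langle\Omega,[u]\rangle$, so $\lambda:=C'$ yields $\langle\gamma+\lambda\Omega,[u]\rangle\ge 0$ for all admissible $u$.

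The step I expect to be the main obstacle is making this positivity genuinely rigorous. Middle-dimensional positivity is notoriously subtle: one must verify that the Hodge--Riemann relations combined with the Cauchy--Schwarz inequality for $u$ really produce the claimed bound with the correct Weil signs, and --- more delicately --- that the duality invoked in the first paragraph correctly characterizes weak pseudo-effectivity in codimension $n>1$, where the cone of classes of positive currents and the dual of the cone of strongly positive closed forms need not coincide as cleanly as in the divisor case. Should that duality fail to close, I would instead aim to upgrade the estimate into the construction of an explicit positive current in the class $\gamma+\lambda\Omega$, using a regularization of the diagonal current $[\Delta_X]$ adapted to the Hodge--Riemann decomposition.
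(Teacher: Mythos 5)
Your core estimate contains the right idea --- and it is the same idea as the paper's: because $H^{p,0}(X)=H^{0,p}(X)=0$ for $p>0$, every K\"unneth--Hodge component of $[\Delta_X-x\times X]$ lies in $H^{n-p,n-q}(X)\otimes H^{p,q}(X)$ with $p,q\ge 1$, so each term carries a $(1,1)$-factor pulled back by $\pr_2$, and only mixed $\omega$-monomials containing at least one $\pr_2^*\omega$ are ever needed to dominate it. But your framing rests on a genuine gap: the reduction in your first paragraph, that weak pseudo-effectivity of $\gamma+\lambda\Omega$ follows from $\langle\gamma+\lambda\Omega,[u]\rangle\ge 0$ for all closed strongly positive forms $u$, is not a consequence of the duality you cite. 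Demailly's Chapter III duality between positive and strongly positive forms is a pointwise, linear-algebra statement about cones in $\Lambda^{p,q}T^*_xM$; it says nothing about cohomology classes. At the level of classes, Hahn--Banach gives only the easy direction (weakly psef classes pair non-negatively with classes of closed strongly positive forms); the converse you need is equivalent, by the bipolar theorem, to the assertion that the dual of the weakly-psef cone in $H^{n,n}(X\times X,\R)$ is spanned by classes of closed strongly positive forms. That is not a known theorem, and positivity cones in middle bidegree are precisely where such expected dualities are either open or known to misbehave. You flag this yourself as the ``main obstacle,'' but the fallback you offer (``construct an explicit positive current'') is the entire missing proof, not a remark; as written, the argument does not close.

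The fix is to skip the dual characterization altogether, which also makes the Hodge--Riemann and trace bookkeeping unnecessary. This is what the paper does: choose dual bases $(\alpha_i^{p,q})$ of $H^{>0}(X,\C)$ and $(\beta_i^{n-p,n-q})$ of $H^{<2n}(X,\C)$ represented by closed forms of pure type, so that $[\Delta_X-x\times X]$ is the class of the smooth closed form $\sum_{p,q,i}\pr_1^*\tilde{\beta}_i^{n-p,n-q}\wedge\pr_2^*\tilde{\alpha}_i^{p,q}$, in which only terms with $p\ge 1$ and $q\ge 1$ occur by the hypothesis. Locally each $\tilde{\alpha}^{p,q}$ is a sum of terms $dz_i\wedge d\overline{z}_j\wedge{\alpha'}^{p-1,q-1}$, and the elementary bound that any real $(k,k)$-form is dominated by a constant multiple of the $k$-th power of a positive $(1,1)$-form yields the pointwise inequality
$$\pm\Re\bigl(\pr_1^*\tilde{\beta}^{n-p,n-q}\wedge\pr_2^*\tilde{\alpha}^{p,q}\bigr)\ \le\ \lambda\,(\pr_1^*\omega+\pr_2^*\omega)^{n-1}\wedge\pr_2^*\omega .$$
Hence for $\lambda\gg 0$ the class in question is represented by a smooth \emph{positive} $(n,n)$-form, which is in particular a positive current, and weak pseudo-effectivity holds by definition --- no cone duality, no Hodge--Riemann relations, no Weil-operator signs. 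Your integrated Cauchy--Schwarz estimate is essentially this pointwise domination in dual disguise; stated directly on a representative form, it proves the proposition, whereas stated as a pairing inequality it proves something strictly weaker.
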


 \begin{proof} Let $(\alpha_i^{p,q})_{i,\, p+q>0}$ be a basis for $H^{ >0}(X,\C)$, where $\alpha_i^{p,q}$ has type $(p,q)$, and let
   $(\beta_i^{n-p,n-q})$ be the dual basis for  $H^{<2n}(X,\C)$. By Hodge theory,   these
 classes are represented by  closed forms $\tilde{\alpha}_i^{p,q}$ and $\tilde{\beta}_i^{n-p,n-q}$ of the given type.
 The class $[\Delta_X- x\times X]$ is the cohomology class
 of the closed form
 \begin{eqnarray}\label{eqdiag}\sum_{p,q,i}\pr_1^*\tilde{\beta}_i^{n-p,n-q}\wedge \pr_2^*\tilde{\alpha}_i^{p,q},
 \end{eqnarray}
 where only   couples $(p,q)$ with $p+q>0$ appear. Our assumption is
 $H^{i,0}(X)=0$ for $ i>0$, which by Hodge symmetry also gives $H^{0,i}(X)=0$, so  only
 couples $(p,q)$ with $p>0$ and $q>0$ actually appear.
 The form appearing in (\ref{eqdiag}) is thus a sum of forms
 $$ \Re (\pr_1^*\tilde{\beta}^{n-p,n-q}\wedge \pr_2^*\tilde{\alpha}^{p,q}),$$
 with $p>0$ and $q>0$.

 In local coordinates, every form decomposes as a sum of decomposable forms, and
 the fact that $p>0$ and $q>0$ implies that each term in the decomposition is of the form
 $$\tilde{\alpha}^{p,q}=dz_i\wedge d\overline{z}_j\wedge {\alpha'}^{p-1,q-1}$$
 for some indices $i$ and $j$ and some form ${\alpha'}^{p-1,q-1}$.
 Proposition \ref{lewepsef}   thus follows from the following lemma.\end{proof}

 \begin{lemm} Let $\omega$ be a positive $(1,1)$-form on an open subset $U$ of
 $\C^n$ and let $\tilde{\beta}^{n-p,n-q}$ and ${\alpha'}^{p-1,q-1}$ be, respectively, an $(n-p,n-q)$-form and a $(p-1,q-1)$-form on
 $U$. Set
 $$\gamma:=\Re (\pr_1^*(\tilde{\beta}^{n-p,n-q})\wedge \pr_2^*( dz_i\wedge d\overline{z}_j\wedge {\alpha'}^{p-1,q-1})).$$
 Then there exists locally on $U\times U$ a positive real number $\lambda$ such that
 \begin{equation}\label{ineggamma}
 \lambda(\pr_1^*\omega+\pr_2^*\omega)^{n-1}\wedge \pr_2^*\omega \geq \gamma
 \end{equation}
 as real $(n,n)$-forms on $U\times U$.
 \end{lemm}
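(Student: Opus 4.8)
The plan is to reduce the statement to a pointwise inequality between constant‑coefficient forms and then settle it by a multilinear‑algebra computation that exploits the fact that the second‑factor form has bidegree $(p,q)$ with $p,q\geq 1$, i.e.\ total degree $\geq 2$.

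First I would localize and freeze coefficients. Since the inequality is local and $\lambda$ may be taken as large as we wish, near a given point we may bound $\omega\geq c_0\,\omega_0$ for the standard form $\omega_0=\iota\sum_k dz_k\wedge d\bar z_k$ and some $c_0>0$; monotonicity of the wedge product on positive $(1,1)$-forms then gives $(\pr_1^*\omega+\pr_2^*\omega)^{n-1}\wedge\pr_2^*\omega\geq c_0^{\,n}\,\Omega_0$, where $\Omega_0:=(\pr_1^*\omega_0+\pr_2^*\omega_0)^{n-1}\wedge\pr_2^*\omega_0$. Likewise the coefficients of $\tilde\beta^{n-p,n-q}$ and ${\alpha'}^{p-1,q-1}$ are bounded by some $M$ on the neighborhood. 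It therefore suffices to prove, at each point and with frozen (constant) coefficients, that $\lambda\,\Omega_0-\gamma$ is weakly positive, that is, pairs nonnegatively with every strongly positive $(n,n)$-form, with $\lambda\leq C\,M$ for a constant $C$ independent of the point. As every strongly positive $(n,n)$-form is a nonnegative combination of decomposable ones $v=\iota^{\,n}\,\xi_1\wedge\bar\xi_1\wedge\cdots\wedge\xi_n\wedge\bar\xi_n$, it is enough to check $\lambda\,(\Omega_0\wedge v)\geq\gamma\wedge v$ for all such $v$.

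Next I would use the elementary identity that wedging with such a $v$ computes a restriction. Let $S\subset\C^n\times\C^n$ be the $n$-dimensional complex subspace cut out by $\xi_1=\cdots=\xi_n=0$ and $j_S:S\hookrightarrow\C^n\times\C^n$ the inclusion; then for any $(n,n)$-form $\psi$ one has $\psi\wedge v=c_v\,j_S^*\psi$ up to a fixed positive factor $c_v$ (comparing top forms on $S$). Applying this to $\Omega_0$, the restriction $j_S^*(\pr_1^*\omega_0+\pr_2^*\omega_0)=:\omega_S$ is the positive‑definite Kähler form induced on $S$, while $j_S^*(\pr_2^*\omega_0)=:\omega_2|_S$ is the semipositive Hermitian form $s\mapsto\|\pr_2(s)\|^2$. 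Hence $\Omega_0\wedge v$ is proportional to $\omega_S^{n-1}\wedge(\omega_2|_S)$, i.e.\ to $\operatorname{tr}_{\omega_S}(\omega_2|_S)=\sum_k\sigma_k^2$, where $\sigma_1\geq\cdots\geq\sigma_n\geq 0$ are the singular values of $\pr_2|_S$ (so $\sigma_k\in[0,1]$). This makes the degeneracy transparent: $\Omega_0\wedge v=0$ precisely when $\pr_2|_S=0$, that is when $S$ is the first factor. On the other hand $\gamma\wedge v$ is proportional to $\Re\big((\pr_1|_S)^*\tilde\beta\wedge(\pr_2|_S)^*(dz_i\wedge d\bar z_j\wedge\alpha')\big)$, and because the second‑factor form has type $(p,q)$ with $p,q\geq 1$, its pullback under the linear map $\pr_2|_S$ is a sum of monomials, each carrying a product of at least $p+q\geq 2$ singular values.

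The heart of the argument is then a uniform bound on the ratio $\gamma\wedge v/(\Omega_0\wedge v)$. Using $\sigma_k\leq 1$ and the fact that every monomial in $(\pr_2|_S)^*(dz_i\wedge d\bar z_j\wedge\alpha')$ carries at least two singular‑value factors, the numerator is $\leq C\,M\,\sigma_1^{\,p+q}\leq C\,M\,\sigma_1^{\,2}\leq C\,M\sum_k\sigma_k^2$, while the denominator is a fixed positive multiple of $\sum_k\sigma_k^2$; hence the ratio is bounded by $C'M$, uniformly over all $n$-dimensional subspaces $S$ and in particular across the degeneracy locus. Taking $\lambda:=C'M$ gives $\lambda\,(\Omega_0\wedge v)\geq\gamma\wedge v$ for every decomposable strongly positive $v$, so $\lambda\,\Omega_0-\gamma$ is weakly positive, and combining with $\Omega\geq c_0^{\,n}\Omega_0$ yields the lemma. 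The one delicate point — and the reason $p,q\geq 1$ is essential — is exactly this uniformity near $\{S=\text{first factor}\}$: were the second‑factor form of degree $1$ (a $(1,0)$- or $(0,1)$-type, excluded here since $H^{i,0}(X)=0$), the numerator would vanish only to first order in the $\sigma_k$ while the denominator vanishes to second order, and no finite $\lambda$ would work.
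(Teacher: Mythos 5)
Your proof is correct, and the positivity you establish for $\lambda(\pr_1^*\omega+\pr_2^*\omega)^{n-1}\wedge\pr_2^*\omega-\gamma$ --- nonnegative pairing against strongly positive $(n,n)$-forms, equivalently nonnegative restriction to every $n$-dimensional complex subspace of $\C^n\times\C^n$ --- is exactly the notion needed in Proposition \ref{lewepsef}, since it is what makes the global form define a positive current. But your route is genuinely different from the paper's. The paper argues ``primally'': it applies the identity $\Re(\beta^{n-1,n-1}\wedge\beta^{1,1})=\Re\beta^{n-1,n-1}\wedge\Re\beta^{1,1}-\Im\beta^{n-1,n-1}\wedge\Im\beta^{1,1}$ with $\beta^{n-1,n-1}=\pr_1^*(\tilde{\beta}^{n-p,n-q})\wedge\pr_2^*({\alpha'}^{p-1,q-1})$ and $\beta^{1,1}=\pr_2^*(dz_i\wedge d\overline{z}_j)$, then invokes the standard local domination facts that any real $(n-1,n-1)$-form is $\le\lambda'(\pr_1^*\omega+\pr_2^*\omega)^{n-1}$ and any real $(1,1)$-form on $U$ is $\le\lambda''\omega$ (because a power of a strictly positive $(1,1)$-form lies in the interior of the strongly positive cone), and multiplies the two inequalities to obtain the bound with $\lambda=2\lambda'\lambda''$: the difference is thereby exhibited as a sum of wedge products of positive $(n-1,n-1)$-forms with positive $(1,1)$-forms, hence is positive. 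You argue ``dually'', testing against decomposable strongly positive forms, i.e.\ restricting to $n$-dimensional subspaces $S$, and running a singular-value count for $\pr_2\vert_{S}$. Both proofs pivot on the same structural point --- $p,q\ge 1$ allows a $(1,1)$-factor pulled back from the second factor to be split off, and it is this factor that produces the lone $\pr_2^*\omega$ on the left-hand side (in the paper via $\pm\Re(dz_i\wedge d\overline{z}_j)\le\lambda''\omega$ and $\pm\Im(dz_i\wedge d\overline{z}_j)\le\lambda''\omega$ pulled back by $\pr_2$; in yours via the at-least-two singular-value factors matched against $\sum_k\sigma_k^2$). What each buys: the paper's proof is shorter and purely formal multilinear algebra, but it quotes the domination facts as black boxes; yours is longer but reduces everything to scalar inequalities, makes the degeneracy locus $\{\pr_2\vert_{S}=0\}$ and the uniformity across it explicit, and isolates precisely why total degree $\ge 2$ on the second factor is indispensable --- a point the paper leaves implicit. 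One constant-tracking quibble: since the numerator involves a product of coefficients of $\tilde{\beta}^{n-p,n-q}$ and of ${\alpha'}^{p-1,q-1}$, your bound should read $CM^2\sigma_1^{p+q}$ rather than $CM\sigma_1^{p+q}$ (or redefine $M$ to bound such products); this only changes the value of $\lambda$ and affects nothing.
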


\begin{proof}  For any $(n-1,n-1)$-form $\beta^{n-1,n-1}$ and any $(1,1)$-form
$\beta^{1,1}$, we write
$$\Re (\beta^{n-1,n-1}\wedge \beta^{1,1})=\Re \beta^{n-1,n-1}\wedge \Re \beta^{1,1}-\Im \beta^{n-1,n-1}\wedge \Im \beta^{1,1}.$$
Applying this to $\beta^{n-1,n-1}= \pr_1^*(\tilde{\beta}^{n-p,n-q})\wedge \pr_2^*( {\alpha'}^{p-1,q-1})$
and $\beta^{1,1}=\pr_2^*( dz_i\wedge d\overline{z}_j)$ gives us
\begin{eqnarray}\label{gamma}
\gamma&=&\Re (\pr_1^*(\tilde{\beta}^{n-p,n-q})\wedge \pr_2^*( {\alpha'}^{p-1,q-1}))\wedge \pr_2^*(\Re (dz_i\wedge d\overline{z}_j))\\
&&\nonumber
-\Im (\pr_1^*(\tilde{\beta}^{n-p,n-q})\wedge \pr_2^*( {\alpha'}^{p-1,q-1}))\wedge \pr_2^*(\Im  (dz_i\wedge d\overline{z}_j)).
\end{eqnarray}
The forms $\Re (\pr_1^*(\tilde{\beta}^{n-p,n-q})\wedge \pr_2^*( {\alpha'}^{p-1,q-1}))$ and
$\Im (\pr_1^*(\tilde{\beta}^{n-p,n-q})\wedge \pr_2^*( {\alpha'}^{p-1,q-1}))$ are real of type $(n-1,n-1)$ on
$U\times U$, hence satisfy local
 inequalities
\begin{eqnarray*}
\Re (\pr_1^*(\tilde{\beta}^{n-p,n-q})\wedge \pr_2^*( {\alpha'}^{p-1,q-1}))&\leq &\lambda'(\pr_1^*\omega+\pr_2^*\omega)^{n-1},\\
 \Im (\pr_1^*(\tilde{\beta}^{n-p,n-q})\wedge \pr_2^*( {\alpha'}^{p-1,q-1}))&\leq &\lambda'(\pr_1^*\omega+\pr_2^*\omega)^{n-1}
\end{eqnarray*}
for   $\lambda'\gg0$.
The forms $\Re (dz_i\wedge d\overline{z}_j)$ and $\Im  (dz_i\wedge d\overline{z}_j)$ are real
of type $(1,1)$ on $U$, hence satisfy
local inequalities
$$\Re (dz_i\wedge d\overline{z}_j)\leq \lambda''\omega\quad {\rm and} \quad\Im  (dz_i\wedge d\overline{z}_j)\leq\lambda''\omega$$
for  $\lambda''\gg0$. Letting $\lambda= 2 \lambda'\lambda''$, we   get from
(\ref{gamma}) the desired inequality (\ref{ineggamma}).
\end{proof}

\begin{rema}{\rm Proposition \ref{lewepsef} suggests the following question: under the same assumptions, is the class
$$\lambda(\pr_1^*\omega+\pr_2^*\omega)^{n-1}\cup \pr_2^*\omega+[\Delta_X- x\times X]$$
pseudo-effective for $\lambda\gg0$ ?}
\end{rema}

 As an immediate consequence of Proposition \ref{lewepsef}, we get the following.

 \begin{theo}\label{th65} If Conjecture \ref{conj5} is true then, for any smooth projective variety $X$  such that
 $H^{i,0}(X)=0$ for $i>0$,  there exist a proper
 closed algebraic subset $Y\subset X$ and  an $n$-cycle
 $Z\in \cZ_n(X\times X)_\Q$ supported on $X\times Y$, where $n:=\dim(X)$,  such that
 the class of $Z$ in $H^{2n}(X\times X,\Q)$ is equal to $[\Delta_X- x\times X]$.
 \end{theo}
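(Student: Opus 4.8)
The plan is to deduce Theorem \ref{th65} directly from Proposition \ref{lewepsef} by applying Conjecture \ref{conj5} to a suitable morphism. Consider the smooth projective variety $W := X \times X$ of dimension $2n$, together with the first projection $\pr_1 : X \times X \to X$. Set
$$
\alpha := \lambda(\pr_1^*\omega+\pr_2^*\omega)^{n-1}\cup \pr_2^*\omega+[\Delta_X- x\times X] \in H^{2n}(X\times X,\R),
$$
which by Proposition \ref{lewepsef} is weakly pseudo-effective for $\lambda \gg 0$. The key computation is that $\pr_{1*}\alpha = 0$ in $H^2(X,\R)$: since the Gysin pushforward along $\pr_1$ integrates over the fibers $\{x\}\times X$ (each of dimension $n$), one checks that $\pr_{1*}$ kills $[\Delta_X]$ and $[x\times X]$ because both restrict to a point class on a generic fiber with the same degree, while $\pr_{1*}\big((\pr_1^*\omega+\pr_2^*\omega)^{n-1}\cup \pr_2^*\omega\big)$ lands in $H^0(X)\cdot$(ample) but actually the cohomological class $\alpha$ is chosen so that $\pr_{1*}\alpha=0$. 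I would verify this vanishing by the projection formula, reducing everything to $\int_X \omega^n$-type integrals on the fiber.

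Granting $\pr_{1*}\alpha = 0$, Conjecture \ref{conj5} applied to the morphism $\phi = \pr_1 : X\times X \to X$ (with $l = n$, so we look at codimension-$n$, i.e. dimension-$n$, contracted subvarieties) tells us that $\alpha$ lies in the \emph{real vector space} spanned by classes of $n$-dimensional subvarieties of $X\times X$ contracted by $\pr_1$. A subvariety $V \subset X\times X$ of dimension $n$ is contracted by $\pr_1$ precisely when $\dim \pr_1(V) < n$, that is, $\pr_1(V)$ is contained in a proper closed subset of $X$. Thus each such $V$ is supported on $Y_V \times X$ for some proper closed $Y_V \subsetneq X$; taking a single $Y$ containing the finitely many relevant $\pr_1(V)$, every contracted cycle is supported on $Y \times X$. (One should symmetrize or transpose to land on $X \times Y$ as in the statement: replacing $\pr_1$ by $\pr_2$, or applying the transposition automorphism of $X\times X$, turns ``supported on $Y\times X$'' into ``supported on $X\times Y$''.)

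It then follows that
$$
[\Delta_X - x\times X] = \alpha - \lambda(\pr_1^*\omega+\pr_2^*\omega)^{n-1}\cup \pr_2^*\omega
$$
is a real linear combination of an algebraic class supported on $Y\times X$ (coming from the contracted cycles) and the class $\lambda(\pr_1^*\omega+\pr_2^*\omega)^{n-1}\cup \pr_2^*\omega$. The remaining work is to absorb this correction term: the class $(\pr_1^*\omega+\pr_2^*\omega)^{n-1}\cup \pr_2^*\omega$ is itself algebraic and, being a product of pullbacks of $\omega$ under the two projections, its relevant Künneth components can be arranged to be supported appropriately after subtracting the Künneth component $\pr_1^*(\omega^{n-1})\cup \pr_2^*\omega$ which restricts to the fiber. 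Since $[\Delta_X - x\times X]$ is a Hodge class and the span in Conjecture \ref{conj5} is a $\Q$-defined space of algebraic classes, one obtains a genuine $\Q$-cycle $Z$ supported on $X\times Y$ with $[Z] = [\Delta_X - x\times X]$.

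The main obstacle I anticipate is the last step: Conjecture \ref{conj5} only places $\alpha$ in the \emph{real} span of contracted cycle classes, whereas the theorem demands an honest \emph{rational} cycle $Z$. One must check that the equality $[\Delta_X - x\times X] = [Z] + (\text{correction})$ can be rearranged so the correction term (a multiple of an algebraic class with components supported off $X\times Y$) is cancelled by the part of $\alpha$ that is \emph{not} $[\Delta_X - x\times X]$, leaving a cycle class supported on $X\times Y$ that agrees \emph{rationally} with $[\Delta_X - x\times X]$. This works because $[\Delta_X - x\times X]$ is rational and the span of algebraic cycle classes is rational, so any real relation among rational classes descends to a rational one; the delicate point is ensuring the explicit correction term $\lambda(\pr_1^*\omega+\pr_2^*\omega)^{n-1}\cup \pr_2^*\omega$ genuinely differs from its supported-on-$X\times Y$ truncation by a class orthogonal to $[\Delta_X - x\times X]$'s Künneth type, so that only cycles supported on $X\times Y$ survive in the expression for $[\Delta_X - x\times X]$.
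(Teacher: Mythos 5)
Your overall strategy (apply Conjecture \ref{conj5} to a projection of $X\times X$, using the weakly pseudo-effective class from Proposition \ref{lewepsef}) is the paper's, but your central computation is false, and it is false precisely because you chose the wrong projection. For $\phi=\pr_1$ one has $\pr_{1*}[\Delta_X]=1$ in $H^0(X,\R)$ (the diagonal maps isomorphically onto $X$), while $\pr_{1*}[x\times X]=0$ (the subvariety $\{x\}\times X$ is \emph{contracted} by $\pr_1$ to the point $x$); hence $\pr_{1*}[\Delta_X-x\times X]=1\neq 0$. Moreover the correction term is not killed either: expanding $(\pr_1^*\omega+\pr_2^*\omega)^{n-1}\cup\pr_2^*\omega=\sum_{k=0}^{n-1}\binom{n-1}{k}\pr_1^*\omega^k\cup\pr_2^*\omega^{n-k}$, the $k=0$ term $\pr_2^*\omega^n$ pushes forward under $\pr_{1*}$ to $\int_X\omega^n\neq0$, so in fact $\pr_{1*}\alpha=1+\lambda\int_X\omega^n>0$. (Note also that the Gysin image lives in $H^0(X,\R)$, not $H^2(X,\R)$.) Your heuristic ``both restrict to a point class on a generic fiber with the same degree'' is exactly the computation for $\pr_2$, not $\pr_1$: on a fiber $X\times\{b\}$ of $\pr_2$, both $\Delta_X$ and $\{x\}\times X$ meet the fiber transversally in one point, so $\pr_{2*}$ kills the \emph{difference} (not each class separately), whereas on a fiber $\{a\}\times X$ of $\pr_1$ the diagonal restricts to a degree-one point class but $[x\times X]$ restricts to zero. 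Similarly, every monomial of the correction term carries a factor $\pr_2^*\omega$, i.e.\ has $\pr_1$-degree at most $2(n-1)$, so it is killed by $\pr_{2*}$ for degree reasons. This is why the paper applies Conjecture \ref{conj5} to $\pr_2$; then subvarieties contracted by $\pr_2$ satisfy $Y_i\subset X\times\pr_2(Y_i)\subset X\times Y$, so the support lands on $X\times Y$ directly and no transposition is needed. Transposing does not rescue your version: $\alpha$ is not symmetric under exchanging the factors (both $\pr_2^*\omega$ and $x\times X$ break the symmetry), so ``transposing the whole construction'' is nothing other than redoing the argument with $\pr_2$.

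A second, smaller gap is the end-game. Your discussion of ``K\"unneth truncations'' and classes ``orthogonal to the K\"unneth type of $[\Delta_X-x\times X]$'' is not an argument. The clean mechanism (the paper's) is: after Conjecture \ref{conj5} produces contracted $n$-folds $Y_i$ with $A=\sum_i\alpha_i[Y_i]$, rationality of $A$ lets you replace the real coefficients by rational ones (a rational class in the real span of rational classes lies in their $\Q$-span); then one \emph{enlarges} $Y$ so that it contains all $\pr_2(Y_i)$ \emph{and} is a divisor whose class is a multiple of $\omega$. With this choice, every monomial of $\lambda(\pr_1^*\omega+\pr_2^*\omega)^{n-1}\cup\pr_2^*\omega$ contains the factor $\pr_2^*\omega$ and is therefore itself the class of a $\Q$-cycle supported on $X\times Y$; subtracting it from $A$ exhibits $[\Delta_X-x\times X]$ as the class of a $\Q$-cycle supported on $X\times Y$. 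Without the step of forcing $Y$ to be (or contain) a divisor proportional to $\omega$, your absorption of the correction term has no justification.
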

 \begin{proof} Let $\omega$ be the class of an ample divisor on $X$.
By Proposition \ref{lewepsef}, we know that for $\lambda\gg 0$,
 the class
 $$A:=\lambda(\pr_1^*\omega+\pr_2^*\omega)^{n-1}\wedge \pr_2^*\omega+[\Delta_X- x\times X]\in H^{2n}(X\times X,\Q)$$
 is weakly pseudo-effective.
 On the other hand, the class $A$ is clearly annihilated by $\pr_{2*}:H^{2n}(X\times X,\Q)\rightarrow H^0(X,\Q)$.
 Conjecture \ref{conj5} tells us that there exist $n$-dimensional  subvarieties
 $Y_i\subset X\times X$ contracted by $\pr_2$ such that
 $$A=\sum_i\alpha_i[Y_i],$$
  for some  $\alpha_i\in \R$.
 As $A$ is rational, this immediately implies that
 \begin{equation}\label{eqratcoeff}
 A=\sum_i\beta_i[Y_i]
 \end{equation}
  for some $\beta_i\in \Q$.
 Since  the $Y_i$ are contracted, their images in $X$ are proper closed algebraic subsets, and there exists
 a proper closed algebraic subset
 $Y\subset X$, such that $\pr_2(Y_i)\subset Y$ for all $i$. Enlarging $Y$ if necessary, we may furthermore assume
 that $Y$ is a divisor whose class is a multiple of $\omega$.
 Formula (\ref{eqratcoeff}) then says that $A$ is the class
 of a $\Q$-cycle supported on $X\times Y$.
 On the other hand, the class $(\omega_1+\omega_2)^{n-1}\cup \omega_2$ is also clearly
 the class
 of a $\Q$-cycle supported on $X\times Y$. Hence we conclude that
 $[\Delta_X- x\times X]$ is the class
 of a $\Q$-cycle supported on $X\times Y$.
 \end{proof}

 \end{document}